\documentclass[a4paper]{article}

\usepackage{amsthm}
\usepackage{amsfonts}
\usepackage{graphicx}
\usepackage{epstopdf}
\usepackage{algorithm}
\usepackage{algorithmic}

\newcommand{\R}{\mathbb{R}}

\usepackage{amsmath,amssymb}
\DeclareMathOperator{\E}{\mathbb{E}}
\DeclareMathOperator{\Tr}{Tr}

\DeclareMathOperator*{\argmax}{arg\,max}

\usepackage[font=footnotesize,labelfont=bf]{caption}
\usepackage[font=footnotesize,labelfont=bf]{subcaption}

\usepackage{cite}
\usepackage{adjustbox}

\usepackage{mathtools}
\DeclarePairedDelimiterX{\norm}[1]{\lVert}{\rVert}{#1}
\DeclarePairedDelimiterX{\inp}[2]{\langle}{\rangle}{#1, #2}

\newtheorem{remark}{Remark}
\newtheorem{theorem}{Theorem}

\newtheorem{lemma}[theorem]{Lemma}

\title{Accelerated Sampling Kaczmarz Motzkin Algorithm for the Linear Feasibility Problem}

\author{Md Sarowar Morshed \thanks{Department of Mechanical $\&$ Industrial Engineering, Northeastern University, Boston, MA 02115, USA }
\and Md Saiful Islam \footnotemark[1]
\and Md Noor-E-Alam \thanks{Corresponding Author: mnalam@neu.edu} \footnotemark[1]}

\begin{document}
\maketitle

\section*{Abstract}
The \textit{Sampling Kaczmarz Motzkin} (SKM) algorithm is a generalized method for solving large-scale linear systems of inequalities. Having its root in the relaxation method of Agmon, Schoenberg, and Motzkin and the randomized Kaczmarz method, SKM outperforms the state-of-the-art methods in solving large-scale \textit{Linear Feasibility} (LF) problems. Motivated by SKM's success, in this work, we propose an \textit{Accelerated Sampling Kaczmarz Motzkin} (ASKM) algorithm which achieves better convergence compared to the standard SKM algorithm on ill-conditioned problems. We provide a thorough convergence analysis for the proposed accelerated algorithm and validate the results with various numerical experiments. We compare the performance and effectiveness of ASKM algorithm with SKM, \textit{Interior Point Method} (IPM) and \textit{Active Set Method} (ASM) on randomly generated instances as well as Netlib LPs. In most of the test instances, the proposed ASKM algorithm outperforms the other state-of-the-art methods.\\

\textit{\textbf{Keywords:}} Kaczmarz Method,  Nesterov's Acceleration,  Motzkin Method,  Sampling Kaczmarz Motzkin Algorithm

\textit{\textbf{MSC 2010:}} 90C05,  65F10, 90C25, 15A39, 68W20

\maketitle

%


\section{Introduction}
\label{sec:intro}
We consider the problem of solving large-scale systems of linear inequalities:
\begin{align}
\label{eq:1}
Ax \leq b, \ \ b \in \R^m, \ A \in \R^{m\times n}
\end{align}
Since, iterative methods are usually better suited to problems with large number of constraints compared to the number of variables, we confine the scope of this work to the $m \gg n$ regime. We denote the rows of matrix $A$ by $a_i^T$ for $i = 1,2,..,m$. In addition, we make the following assumptions: (1) the system is consistent, (2) matrix $A$ has no zero rows and (3) the rows of $A$ are normalized (i.e. $\|a_i\| =1$). It is worth noting that the last assumption is not a significantly important requirement for algorithmic efficiency, but it helps in the convergence analysis.

 While most classical iterative methods are deterministic, recent works \cite{strohmer:2008,lewis:2010,needell:2010,drineas:2011,zouzias:2013,lee:2013,ma:2015,qu:2016} suggest that randomization can play a huge role in the design of efficient algorithms for solving LF problems and randomized algorithms often perform better than existing deterministic methods. As shown in \cite{haddock:2017}, randomized iterative methods can outperform state-of-the-art methods (i.e., IPM, ASM) for large-scale LF. In the field of large-scale optimization, mainly IPMs, there is a growing interest in approximate Newton-type methods (\cite{dembo:1982,stanley:1996,bellavia:1998,xin:2010,wang:2013,kaifeng:2012,jacek:2013}) which use fast sub-schemes for calculating approximate solutions of large-scale \textit{Linear System} (LS).

The Kaczmarz method for solving LS, discovered in 1937 \cite{kaczmarz:1937}, remained unnoticed to the western research community until the early 1980s, when it found an important application in the area of \textit{Algebraic Reconstruction Techniques} (ART) for image reconstruction \cite{herman:2009}. Since then it has been used for several other areas like digital signal processing, computer tomography, and belongs to a general category of methods including row-action, component solution, cyclic projection, and successive projection methods (see \cite{censor:1981}). It gained immense popularity in the research community after the convergence analysis done in 2009 for the randomized version \cite{strohmer:2008}. The convergence analysis of Strohmer \cite{strohmer:2008} encouraged numerous extensions and generalizations of the randomized Kaczmarz method (see \cite{lewis:2010,needell:2010,zouzias:2013,lee:2013,ma:2015,wright:2016}, for instance when we replace the equality constraints with inequality constraints we get a variant of the original problem.

Motzkin's relaxation method is a variation of the Kaczmarz method which was introduced in the early 1950s \cite{agamon,motzkin} for solving systems of linear inequalities. Since then, it has been rediscovered several times. For instance, the famous perceptron algorithm in machine learning \cite{rosenblatt,ramdas:2014,ramdas:2016} can be thought of as a member of this family of methods. Additionally, the relaxation method has been referred to as the Kaczmarz method with the ``most violated constraint control" or the ``maximal-residual control" \cite{censor:1981,nutini:2016,petra:2016}. The rate of convergence of Motzkin's method depends on step lengths and the so called Hoffman constants \cite{agamon,hoffman}.

Combining both the Kaczmarz and Motzkin method together, the SKM algorithm proposed in \cite{haddock:2017} for solving LF problem given in \eqref{eq:1} requires only $O(n)$ memory storage and it has a linear convergence rate. As shown by the authors, SKM is much more efficient than the state-of-the-art techniques such as IPMs, ASMs, and Kaczmarz Methods. Roughly, the SKM algorithm selects a row out of $\beta$ rows (sampled from $A$) by the maximum violation criterion (i.e. choose the row $i^*$ with $i^* = \argmax_{i \in \tau_k} \{a_i^Tx_k-b_i, 0\}, \ \beta = |\tau_k|$) and then updates the next point as follows:
\begin{align}
\label{eq:2}
  x_{k+1} = x_k- \delta \frac{\left(a_{i^*}^Tx_k -b_{i^*}\right)^+}{\|a_{i^*}\|^2} a_{i^*}  
\end{align}
In equation \eqref{eq:2}, $\delta $ can be $ 0 \leq \delta \leq 2$. Without the loss of generality, we consider $\delta =1$ in this work. The SKM method described in \cite{haddock:2017} overcomes the drawbacks of the individual methods (Kaczmarz, Motzkin) and combines their strengths. By selecting the maximum violated hyperplane from a sample, SKM achieves faster convergence compared to the randomized Kaczmarz method. In addition, per iteration computational cost is cheaper compared to Motzkin's method. Recently, Wright \textit{et. al} \cite{wright:2016} applied the acceleration scheme of Nesterov to the randomized Kaczmarz method. In a different work, Xu \textit{et. al} \cite{xu:2017} investigated the acceleration scheme in the context of the extended randomized Kaczmarz method for least square problems. Moreover, there is a recent work in applying Nesterov scheme in IPMs for solving large-scale linear programming problems \cite{morshed:2018}. The above-mentioned works showed that the introduction of Nesterov's acceleration scheme fasten the convergence of the original method.

In this work, we apply Nesterov's acceleration scheme \cite{nesterov:1983,nesterov:2013,nesterov:2014,nesterov:2005,nesterov:2012} to the generalized SKM algorithm. This can be seen as a generalized accelerated scheme for both randomized Kaczmarz method for solving linear systems as well as linear system of inequalities. It can be noted that with some modification, like the one stated in the work of Lewis \textit{et. al} \cite{lewis:2010}, we can apply this method to linear systems with both equality and inequality constraints. The overarching goal of this paper is to incorporate the ideas of the Kaczmarz method \cite{strohmer:2008,kaczmarz:1937,gordon:1970} for LS and Motzkin's relaxation \cite{haddock:2017} for LF problem and develop an accelerated randomized scheme for solving large-scale LF problem. The paper is organized as follows. The proposed algorithm is discussed in section \ref{sec:alg}, and the convergence analysis of the proposed algorithm is given in section \ref{sec:conv}. Extensive Numerical experiments performed on random and Netlib LP instances are provided in section \ref{sec:num}. And finally the paper is concluded with the conclusion in section \ref{sec:colc}.

\section{ASKM Algorithm}
\label{sec:alg}
\subsection{Notation:}

We follow the standard notation in this work. For example, $\mathbb{R}$ will be used to denote the set of real numbers. Matrix $A$ with $m$ rows and $n$ columns belong to $\mathbb{R}^{m\times n}$, with $A_{ij}$ denoting the real-valued element in row $i$ and column $j$. $A^T$ will be used to denote the transpose of matrix $A$, with $tr(A)$, $det(A)$, and $diag(A)$ denoting the trace, determinant, and diagonal of matrix $A$ respectively. $I_n$ will be used as the $n\times n$ identity matrix.

Furthermore, we use vectors $\mathbf{1}=\left[1~1~\ldots~1\right]^T$ and $e_i$ as the standard $i$-th basis vector. A function $f:X\mapsto Y$ maps its domain, $dom(f)\subseteq X$, into set $Y$. As it is customary, we use $\nabla f$ and $\nabla^2 f$ to represent the gradient and Hessian of $f$. Finally, $\langle x,y\rangle = x^Ty$ denotes the standard inner product and $\|x\| = \sqrt{\langle x, x \rangle}$ as the euclidean ($L_2$) norm. $\lambda_{min}, \lambda_{max}$ are set to be the minimum and maximum nonzero eigenvalues of $A^TA$ respectively. $\|A\|$ is the spectral norm of the matrix $A$ and $\|A\|_F$ denote the Frobenius norm. Moreover, $A^{\dagger}$ is the Moore-Penrose pseduinverse of $A$ and the corresponding compact singular value decomposition of $A \in \mathbb{R}^{m\times n}$ as $A = U \Sigma V^T$, where $U,V$ are unitary matrices with appropriate size and $\Sigma$ is the non-singular and diagonal matrix with singular value on the diagonal. Throughout the paper, we denote $\zeta$ as the condition number of matrix $A$. The notation $\mathcal{P}_{A,b}(x)$ denotes the Euclidean norm projection of $x$ onto the feasible region of $Ax \leq b$. In this section, we review the proposed SKM algorithm in \cite{haddock:2017} and then based on the motivation from the accelerated randomized Kaczmarz algorithm in \cite{wright:2016} and accelerated extended Kaczmarz algorithm in \cite{xu:2017}, we develop ASKM algorithm. 
\begin{algorithm}
\caption{SKM Algorithm: $x_{k+1} = \textbf{SKM}(A,b,x_0,K,\delta, \beta )$}
\label{alg:skm}
\begin{algorithmic}
\STATE{Initialize $k \leftarrow 0$;}
\WHILE{$k \leq K$}
\STATE{Choose a sample of $\beta$ constraints, $\tau_k$}, uniformly at random from the rows of matrix $A$.
\STATE{From these $\beta$ constraints, choose $i^* = \argmax_{i \in \tau_k} \{a_i^Tx_k-b_i, 0\}$;}
\STATE{Update $x_{k+1} = x_k- \delta \frac{\left(a_{i^*}^Tx_k -b_{i^*}\right)^+}{\|a_{i^*}\|^2} a_{i^*}$;}
\STATE{$k \leftarrow k+1$;}
\ENDWHILE
\RETURN $x$
\end{algorithmic}
\end{algorithm}

\begin{algorithm}
\caption{ASKM Algorithm: $x_{k+1} = \textbf{ASKM}(A,b,x_0,K, \beta,  \lambda_{min}, \zeta)$}
\label{alg:askm}
\begin{algorithmic}
\STATE{Choose $\lambda \in [0, \lambda_{min}]$;}
\STATE{Initialize $v_0 \leftarrow x_0, \ \gamma_{-1} \leftarrow 0, \  k \leftarrow 0$;}
\WHILE{$k \leq K$}
\STATE{Choose $\gamma_k$ to be the larger root of
\begin{align}
\label{eq:4}
\gamma_k^2-\frac{\zeta}{m} \gamma_k = \frac{d}{\beta} \left(1- \frac{\lambda \beta }{m} \gamma_k\right) \gamma_{k-1}^2   
\end{align}

}
\STATE{Update $\alpha_k$ and  $ \beta_k$ as follows:
\begin{align}
 \alpha_k & = \frac{\zeta(m- \lambda \beta \gamma_k)}{\gamma_k(m^2-\lambda \zeta \beta)} \label{eq:5} \\
\beta_k & = 1- \frac{\lambda \beta }{m} \gamma_k  \label{eq:6} \ ;  
\end{align}
}
\STATE{Update $y_k = \alpha_k v_k + (1-\alpha_k)x_k$;}
\STATE{Choose a sample of $\beta$ constraints, $\tau_k$, uniformly at random from the rows of matrix $A$. From these $\beta$ constraints, choose $i^* = \argmax_{i \in \tau_k} \{a_i^Tx_k-b_i, 0\}$;}
\STATE{Update 
\begin{align}
x_{k+1} & = y_k- \frac{\left(a_{i^*}^Ty_k -b_{i^*}\right)^+}{\|a_{i^*}\|^2} a_{i^*}; \label{eq:7} \\
v_{k+1} & = \beta_k v_k+ (1-\beta_k)y_k-\gamma_k \frac{\left(a_{i^*}^Ty_k -b_{i^*}\right)^+}{\|a_{i^*}\|^2} a_{i^*} \label{eq:8} ;
\end{align}
}
\STATE{$k \leftarrow k+1$;}
\ENDWHILE
\RETURN $x$
\end{algorithmic}
\end{algorithm}

In the above algorithm, we propose to use the acceleration scheme discovered by Nesterov \cite{nesterov:1983,nesterov:2013,nesterov:2014,nesterov:2005,nesterov:2012} in the SKM algorithm framework to achieve second order convergence rate as compared to the linear rate shown in \cite{haddock:2017}. The ASKM algorithm uses the acceleration procedure \cite{nesterov:2014}, which is more famous in the context of gradient descent algorithm. Note that, Nesterov's acceleration scheme uses two new sequences $\{y_k\}$ and $\{v_k\}$ and update the sequences as follows:
\begin{align}
\label{eq:3}
 & y_k  = \alpha_k v_k + (1-\alpha_k) x_k \nonumber \\
 & x_{k+1}  = y_k - \theta_k \nabla f(y_k) \\
 & v_{k+1} = \beta_k v_k + (1- \beta_k) y_k - \gamma_k \nabla f (y_k) \nonumber
\end{align}
In equation \eqref{eq:3}, $\nabla f$ is the gradient of the given function and $\theta_k$ is the step-size. The main contribution for the above scheme is that it uses appropriate values for the parameters $\alpha_k, \beta_k, \gamma_k$, which in turn yield better convergence in the context of standard gradient descent. Now, using the general setup of Nesterov's scheme \cite{nesterov:2012} for coordinate descent and the idea in \cite{wright:2016}, we developed ASKM algorithm shown above (Algorithm \ref{alg:askm}).

\section{Convergence Analysis}
\label{sec:conv}

In this section, we analyze the convergence of the proposed ASKM algorithm \ref{alg:askm}. Throughout the analysis, we make the assumptions: 1) $\|a_i\| = 1$ for any $i\in m$, which implies $\|A\|_F^2 = m$ and 2) $\mathcal{P}_{A,b}$ is full dimensional. The following convergence result was proven in \cite{haddock:2017} for the SKM algorithm (Algorithm \ref{alg:skm}):
\begin{align}
\label{eq:9}
E\left[\|x_{k+1}-\mathcal{P}_{A,b}(x_{k+1})\|^2\right] & \leq \left(1- \frac{2 \delta -\delta^2}{V_k L^2}\right) \|x_k-\mathcal{P}_{A,b}(x_{k})\|^2 \nonumber \\
& \leq \left(1- \frac{2 \delta -\delta^2}{m L^2}\right)^{k+1} \|x_k-\mathcal{P}_{A,b}(x_{0})\|^2
\end{align}
In the above equation, $L$ is the Hoffman constant and $V_k$ is defined in the proof of Lemma \ref{lem:3}. For the ASKM algorithm (Algorithm \ref{alg:askm}) shown above, we prove a better convergence result as stated in Theorem \ref{th:1} compared to the one in \eqref{eq:9} (we consider the case $\delta =1$). 

\begin{remark}
This framework for convergence in the context of acceleration follows the general idea developed by Nesterov \cite{nesterov:2013} for the \textit{Gradient Descent} method. The proof of Theorem \ref{th:1} follows the generalized sketch developed by Nesterov \cite{nesterov:2012} for proving the convergence result of \textit{Coordinate Descent} method. Due to the similarity of acceleration methods derived in \cite{wright:2016} for the randomized Kaczmarz method and our proposed method, we will use the same standard notation on this subject. In addition to that, the following results generalize results for acceleration in Kaczmarz types methods (i.e. if we select $\beta =1$ and use linear systems, we get the same results shown in \cite{wright:2016}).
\end{remark}

\begin{theorem}
\label{th:1}
The ASKM algorithm defined above with $\lambda \in [0, \lambda_{min}]$ and $\sigma_1 = 1+\frac{\sqrt{\lambda \beta \zeta}}{2m}, \sigma_2 = 1-\frac{\sqrt{\lambda \beta \zeta}}{2m}$, then for all $k \geq 0 $ we have the following:
\begin{align}
& \E \left[\big\|v_{k+1}-x^*\big\|^2_{(A^TA)^{\dagger}}\right] \ \leq \ \frac{4\big\|x_0-x^*\big\|^2_{(A^TA)^{\dagger}}}{\left(\sigma_1^{k+1}+\sigma_2^{k+1}\right)^2} \label{eq:10} \\
& \E \left[\big\|x_{k+1}-x^*\big\|^2\right] \ \leq \ \frac{4 \lambda\big\|x_0-x^*\big\|^2_{(A^TA)^{\dagger}}}{\zeta \left(\sigma_1^{k+1}-\sigma_2^{k+1}\right)^2} \label{eq:11}
\end{align} 
Here, $x^* \in \mathbb{R}^{n}$ is a unique limit point of the ASKM iterates (for the uniqueness of $x^*$ see Lemma 2.2-2.4 in \cite{haddock:2017}), $\zeta$ is the condition number of matrix $A$ and $\beta$ is the sample size of the random sampling process.
\end{theorem}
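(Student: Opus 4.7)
The plan is to adapt the estimating-sequence machinery Nesterov developed for accelerated coordinate descent \cite{nesterov:2012}, which Wright et al.\ \cite{wright:2016} imported into the randomized Kaczmarz setting, replacing the single-row random update by the max-in-a-sample update of ASKM. The argument splits into four pieces: a pathwise/conditional bound on $\|x_{k+1}-x^*\|^2$; a companion conditional bound on $\|v_{k+1}-x^*\|^2_{(A^TA)^{\dagger}}$; a weighted combination in which the parameter choices \eqref{eq:4}--\eqref{eq:6} are tuned so as to collapse it into a Lyapunov inequality $\E_k[\Phi_{k+1}]\leq\Phi_k$; and finally a closed-form solution of the scalar recursion \eqref{eq:4} for $\gamma_k$ that exhibits the factors $\sigma_1^{k+1}\pm\sigma_2^{k+1}$.

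For the bound on $x_{k+1}$, observe that the update $x_{k+1}=y_k-\frac{(a_{i^*}^Ty_k-b_{i^*})^+}{\|a_{i^*}\|^2}a_{i^*}$ is either the identity (when constraint $i^*$ is already satisfied by $y_k$) or the Euclidean projection of $y_k$ onto the halfspace $\{x:a_{i^*}^Tx\leq b_{i^*}\}$. Because $x^*$ is feasible, nonexpansiveness of the projection yields $\|x_{k+1}-x^*\|^2\leq\|y_k-x^*\|^2-\bigl((a_{i^*}^Ty_k-b_{i^*})^+\bigr)^2/\|a_{i^*}\|^2$ pathwise. Taking the conditional expectation over the sample $\tau_k$ and invoking the maximum-violation/sampling estimate from the SKM analysis \cite{haddock:2017}, together with the Hoffman-type lower bound valid under the full-dimensionality assumption on $\mathcal{P}_{A,b}$, produces an expected decrease of the form $\E_k[\|x_{k+1}-x^*\|^2]\leq\|y_k-x^*\|^2-c\,\|y_k-\mathcal{P}_{A,b}(y_k)\|^2$ with an explicit $c$ depending on $m,\beta,\zeta$. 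For $v_{k+1}$ I would expand $v_{k+1}-x^*=\beta_k(v_k-x^*)+(1-\beta_k)(y_k-x^*)-\gamma_k r_k$ with $r_k=\frac{(a_{i^*}^Ty_k-b_{i^*})^+}{\|a_{i^*}\|^2}a_{i^*}$, square in the $(A^TA)^{\dagger}$-norm, and use that the residual direction $a_{i^*}$ lies in the row space of $A$ so that $\|a_{i^*}\|^2_{(A^TA)^{\dagger}}$ is explicitly computable; this gives an analogous expected bound quadratic in $\beta_k$, $1-\beta_k$, and $\gamma_k$ involving the same residual scalar that appeared in the $x$-estimate.

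The combination step is where I expect the main obstacle. Multiplying the $x$-estimate by an appropriate power of $\gamma_k$ and adding it to the $v$-estimate, one must enforce three simultaneous algebraic identities: the definition $y_k=\alpha_k v_k+(1-\alpha_k)x_k$ has to cancel the bilinear $\langle v_k-x^*,y_k-x^*\rangle_{(A^TA)^{\dagger}}$ term, which pins down $\alpha_k$ and yields \eqref{eq:5}; the coefficient in front of the squared residual $((a_{i^*}^Ty_k-b_{i^*})^+)^2$ must be non-positive, which forces the quadratic constraint \eqref{eq:4} on $\gamma_k$; and $\beta_k$ must be set so that the strong-convexity-like inequality $\lambda\|\cdot\|^2\leq\|\cdot\|^2_{(A^TA)^{\dagger}}$ (valid on the row space with $\lambda\leq\lambda_{min}$) absorbs the remaining non-residual discrepancy, producing \eqref{eq:6}. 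These three requirements uniquely determine \eqref{eq:4}--\eqref{eq:6} and leave a Lyapunov quantity $\Phi_k=\|v_k-x^*\|^2_{(A^TA)^{\dagger}}+w_k\|x_k-x^*\|^2$, with a weight $w_k$ depending on $\gamma_k$ and $\lambda$, that satisfies $\E_k[\Phi_{k+1}]\leq\Phi_k$. Verifying the exact cancellation is a page of careful bookkeeping which I would isolate into a lemma before assembling the main inequality.

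Finally, I unroll the Lyapunov inequality by the tower property to obtain $\E[\Phi_k]\leq\Phi_0=\|x_0-x^*\|^2_{(A^TA)^{\dagger}}$ and solve \eqref{eq:4} as a linear difference equation in $\gamma_k$; its characteristic roots are exactly $\sigma_1=1+\tfrac{\sqrt{\lambda\beta\zeta}}{2m}$ and $\sigma_2=1-\tfrac{\sqrt{\lambda\beta\zeta}}{2m}$, so by induction $\gamma_k$ is proportional to $(\sigma_1^{k+1}-\sigma_2^{k+1})$ while the weight $w_k$ is proportional to $(\sigma_1^{k+1}+\sigma_2^{k+1})^2$. Isolating the two summands of $\Phi_k$ separately then yields \eqref{eq:10} and \eqref{eq:11}; the extra prefactor $\lambda/\zeta$ in \eqref{eq:11} is precisely the constant used in the combination step to pass from $\|\cdot\|^2_{(A^TA)^{\dagger}}$ to $\|\cdot\|^2$ on the row space of $A$.
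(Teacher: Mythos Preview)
Your overall architecture is the same as the paper's: expand $\|v_{k+1}-x^*\|^2_{(A^TA)^{\dagger}}$, control the three resulting pieces, tune $\alpha_k,\beta_k,\gamma_k$ so that a weighted Lyapunov quantity $B_k^2\|v_k-x^*\|^2_{(A^TA)^{\dagger}}+A_k^2\|x_k-x^*\|^2$ is a supermartingale, and then read off the rates from the growth of the weights. Two of your technical claims, however, would not go through as written.

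First, the spectral inequality you invoke is reversed. For $\lambda\le\lambda_{\min}$ and $z$ in the row space of $A$ one has $\|z\|^2_{(A^TA)^{\dagger}}\le\frac{1}{\lambda}\|z\|^2$, not $\lambda\|z\|^2\le\|z\|^2_{(A^TA)^{\dagger}}$. The paper uses exactly this direction in \eqref{eq:25} to pass from the $(A^TA)^{\dagger}$-norm to the Euclidean norm on the $(1-\beta_k)\|y_k-x^*\|^2$ term. Relatedly, the cross term is not disposed of by a bare cancellation from the relation $y_k=\alpha_kv_k+(1-\alpha_k)x_k$; the paper introduces the auxiliary function $f(x)=\tfrac12[(Ax-b)^+]^TA[(A^TA)^{\dagger}]^2A^T[(Ax-b)^+]$, shows $\nabla f$ is $\zeta$-Lipschitz (Lemma~\ref{lem:4}), and uses the resulting descent lemma to bound $\langle x^*-y_k+z_k(x_k-y_k),\nabla f(y_k)\rangle$. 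This is where the condition number $\zeta$ actually enters; your sketch does not indicate an analogue of this step, and without it the bilinear term will not produce the coefficient $d\beta_k\gamma_{k-1}^2$ that makes the Lyapunov inequality close.

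Second, the endgame is not ``solve \eqref{eq:4} as a linear difference equation in $\gamma_k$''. Equation \eqref{eq:4} is quadratic in $\gamma_k$ and nonlinear in $k$; $\sigma_1,\sigma_2$ are not its characteristic roots. What the paper does is define auxiliary nonnegative sequences $B_{k+1}^2=B_k^2/\beta_k$ and $A_{k+1}^2=\beta\gamma_k^2B_{k+1}^2$, multiply the one-step inequality by $B_{k+1}^2$ to get the clean telescoping form, and then derive the coupled lower bounds $A_{k+1}\ge A_k+\tfrac{\zeta\sqrt{\beta}}{2m}B_k$ and $B_{k+1}\ge B_k+\tfrac{\lambda\sqrt{\beta}}{2m}A_k$. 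It is the $2\times2$ transfer matrix of this system whose eigenvalues are $\sigma_1,\sigma_2$, and diagonalizing it with $A_0=0$ yields $A_{k+1}\ge\tfrac12\sqrt{\zeta/\lambda}\,(\sigma_1^{k+1}-\sigma_2^{k+1})B_0$ and $B_{k+1}\ge\tfrac12(\sigma_1^{k+1}+\sigma_2^{k+1})B_0$, from which \eqref{eq:10}--\eqref{eq:11} follow. Your proposal should replace the ``linear recursion for $\gamma_k$'' step by this $A_k$/$B_k$ device.
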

Before delving into Theorem \ref{th:1}, we start with the proof of some useful lemmas. For the expectation calculation of the random process described in our algorithm, we need a certain setup. Let, $(Ax_k-b)^+_{i_j}$ denote the $(\beta+j)^{th}$ smallest entry of the residual vector (if we order the entries of $(Ax_k-b)^+$
from smallest to largest, $(Ax_k-b)^+_{i_j}$ is the entry in $(\beta+j)^{th}$ position). Now, if we consider the size of all the entries of the residual vector $(Ax_k-b)^+$, we can calculate the probability that a particular entry of the residual vector is selected. In this case, each sample has equal probability of selection (i.e., $\frac{1}{\binom{m}{\beta}}$). Moreover, the size of the residual vector controls the frequency that each entry of the residual vector will be expected to be selected (Algorithm \ref{alg:askm}, sample of constraints selection). For example, if we have only one sample then $\beta^{th}$ smallest entry will be selected and for the case of all samples, $m^{th}$ smallest entry will be selected. Therefore, if we expand the expectation of the residual (with respect to the probabilistic choice of sample constraints, $\tau_j$, of size $\beta$), we get the following:
\begin{align}
\label{eq:12}
\E_{\beta_{i^*}} \left[\big\|\left(a_i^Ty-b_i\right)^{+}_{i^*}\big\|^2\right] = \frac{1}{\binom{m}{\beta}} \sum\limits_{k = 0}^{m-\beta} \binom{\beta-1+k}{\beta-1} \big | \left(a_i^Ty-b_i\right)^{+}_{i_k} \big |^2    
\end{align}
where $\E_{\beta_{i^*}}$ denotes the required expectation in accordance with the above sampling process ($\beta$ is the sample size).

\begin{lemma}
\label{lem:1}
For any $y \in \R^n,$ we have the following:
\begin{align}
& \E_{\beta_{i^*}} \left[\big\|a_{i^*}\left(a_i^Ty-b_i\right)^{+}_{i^*}\big\|^2_{(A^TA)^{\dagger}}\right] \ \leq \ \frac{\beta}{m} \ \| (Ay-b)^{+}\|^2 \label{eq:13}
\end{align}
\end{lemma}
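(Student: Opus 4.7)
The plan is to combine two ingredients: a pointwise bound on the quadratic form $a_i^T(A^TA)^{\dagger}a_i$, and the ordering-based expectation formula (12) that was set up just above the lemma.

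First, since $a_{i^*}(a_{i^*}^Ty-b_{i^*})^+_{i^*}$ is just a scalar multiple of $a_{i^*}$, I factor the weighted norm as
\begin{equation*}
\bigl\|a_{i^*}(a_i^Ty-b_i)^+_{i^*}\bigr\|^2_{(A^TA)^{\dagger}} \;=\; \bigl((a_{i^*}^Ty-b_{i^*})^+_{i^*}\bigr)^{2}\cdot a_{i^*}^T(A^TA)^{\dagger}a_{i^*}.
\end{equation*}
So the task reduces to controlling the scalar weight $a_{i^*}^T(A^TA)^{\dagger}a_{i^*}$ uniformly in $i^*$ and then using (12) on the squared residual part.

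Second, I establish that $a_i^T(A^TA)^{\dagger}a_i \le 1$ for every row index $i$. Using the compact SVD $A=U\Sigma V^T$ already introduced in the notation, one checks that $A(A^TA)^{\dagger}A^T = U\Sigma V^T \cdot V\Sigma^{-2}V^T \cdot V\Sigma U^T = UU^T$, which is the orthogonal projector onto the column space of $A$ in $\R^m$. Since $a_i = A^Te_i$, this gives $a_i^T(A^TA)^{\dagger}a_i = e_i^T UU^T e_i = \|U_i\|^2$, and because $UU^T$ is a symmetric idempotent matrix its diagonal entries lie in $[0,1]$; the normalization $\|a_i\|=1$ is not needed here. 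This is the step I expect to be the main conceptual obstacle, as it is the only place the geometry of $(A^TA)^{\dagger}$ really enters; the rest is combinatorics.

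Third, using that uniform bound inside the expectation and then applying formula (12) gives
\begin{equation*}
\E_{\beta_{i^*}}\!\left[\bigl\|a_{i^*}(a_i^Ty-b_i)^+_{i^*}\bigr\|^2_{(A^TA)^{\dagger}}\right] \;\le\; \E_{\beta_{i^*}}\!\left[\bigl((a_i^Ty-b_i)^+_{i^*}\bigr)^{2}\right] \;=\; \frac{1}{\binom{m}{\beta}} \sum_{k=0}^{m-\beta}\binom{\beta-1+k}{\beta-1}\bigl|(a_i^Ty-b_i)^+_{i_k}\bigr|^2.
\end{equation*}
Finally, the binomial weight $\binom{\beta-1+k}{\beta-1}$ is monotonically increasing in $k$, so its maximum on $k\in\{0,\dots,m-\beta\}$ is $\binom{m-1}{\beta-1}$. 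Pulling this out of the sum and using the elementary identity $\binom{m-1}{\beta-1}/\binom{m}{\beta} = \beta/m$ together with $\sum_{k=0}^{m-\beta}|(a_i^Ty-b_i)^+_{i_k}|^2 \le \|(Ay-b)^+\|^2$ yields the claimed bound $\tfrac{\beta}{m}\|(Ay-b)^+\|^2$, completing the proof.
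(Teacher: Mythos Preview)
Your proof is correct and rests on exactly the same two ingredients as the paper's: the SVD identity $A(A^TA)^{\dagger}A^T=UU^T$ giving $a_i^T(A^TA)^{\dagger}a_i=\|U_i\|^2\le 1$, and the crude binomial bound $\binom{\beta-1+k}{\beta-1}\le\binom{m-1}{\beta-1}$ on the weights in (12). The only difference is ordering: you extract the pointwise bound $a_i^T(A^TA)^{\dagger}a_i\le 1$ \emph{before} taking expectation, so the geometry of $(A^TA)^{\dagger}$ is dispatched in one line, whereas the paper carries the weighted norms $\|a_{i_k}\|^2_{(A^TA)^{\dagger}}$ through the combinatorial step, rewrites the resulting sum as a trace $\Tr\bigl[(A^TA)^{\dagger}A^TD^2A\bigr]$, and only then invokes the SVD to arrive at the same $\|U_j\|^2\le 1$ bound. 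Your route is shorter and avoids the trace manipulation entirely; the paper's route has the mild advantage that it would adapt more transparently if one wanted a sharper bound that kept the individual leverage scores $\|U_j\|^2$ rather than replacing them all by $1$.
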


\begin{proof}
Let us define the singular value decomposition of $A$ as $A = U \Sigma V^T,$ where both $U$ and $V$ are unitary matrices of appropriate dimension and $\Sigma$ is a positive diagonal. We can easily show that $(A^TA)^{\dagger} = V \Sigma^{-2}V^T$. Then, with the defined orientation above, we have the following:
\allowdisplaybreaks{
\begin{align*}
\E_{\beta_{i^*}} & \left[\big\|a_{i^*}\left(a_i^Ty-b_i\right)^{+}_{i^*}\big\|^2_{(A^TA)^{\dagger}}\right]  \\
& = \frac{1}{\binom{m}{\beta}} \sum\limits_{k = 0}^{m-\beta} \binom{\beta-1+k}{\beta-1} \big\|a_{i_k}\big\|^2_{(A^TA)^{\dagger}} \big | \left(a_i^Ty-b_i\right)^{+}_{i_k} \big |^2 \\
& \leq \ \frac{\binom{m-1}{\beta-1}}{\binom{m}{\beta}} \sum\limits_{k = 0}^{m-\beta} \big\|a_{i_k}\big\|^2_{(A^TA)^{\dagger}} \big | \left(a_{i_k}^Ty-b_{i_k}\right)^{+} \big |^2 \\
& \leq \ \frac{\beta}{m} \sum\limits_{j = 1}^{m} \big\|a_{j}\big\|^2_{(A^TA)^{\dagger}} \big | \left(a_{j}^Ty-b_{j}\right)^{+} \big |^2 \\
& = \ \frac{\beta}{m} \sum\limits_{j = 1}^{m} \ \inp[\Big]{ (A^TA)^{\dagger} a_{j}\left(a_j^Ty-b_j\right)^{+}}{ a_{j}\left(a_j^Ty-b_j\right)^{+}}  \\
& = \frac{\beta}{m} \Tr\left[ (A^TA)^{\dagger} \sum\limits_{j = 1}^{m} \ a_{j}\{\left(a_j^Ty-b_j\right)^{+}\}^2 a_j^T\right] \\
& = \frac{\beta}{m} \Tr\left[ (A^TA)^{\dagger} A^T D^2 \left[\left(Ay-b\right)^{+} \right] A\right] \\
& = \frac{\beta}{m} \Tr\left[ V \Sigma^{-2}V^T V \Sigma U^T D^2 \left[\left(Ay-b\right)^{+} \right] U \Sigma V^T \right] \\
& = \frac{\beta}{m} \Tr\left[U^T D^2 \left[\left(Ay-b\right)^{+} \right] U \right] \\
& = \frac{\beta}{m} \big \|D \left[\left(Ay-b\right)^{+} \right] U \big \|^2_{F} \\
& = \frac{\beta}{m} \sum\limits_{j =1}^{m} \big | \left(a_{j}^Ty-b_{j}\right)^{+} \big |^2 \|U_j\|^2_2 \\
& \leq \ \frac{\beta}{m} \sum\limits_{j =1}^{m} \big | \left(a_{j}^Ty-b_{j}\right)^{+} \big |^2 = \ \frac{\beta}{m} \| (Ay-b)^{+}\|^2
\end{align*}}
This proves Lemma \ref{lem:1}.
\end{proof}

\begin{lemma}
\label{lem:2}
For any $y \in \R^n,$ we have the following:
\begin{align}
& \E_{\beta_{i^*}} \left[\big\|a_{i^*}\left(a_i^Ty-b_i\right)^{+}_{i^*}\big\|^2\right] \ \leq \ \frac{\beta}{m} \ \| (Ay-b)^{+}\|^2 \label{eq:14}
\end{align}
\end{lemma}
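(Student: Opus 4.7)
The plan is to mimic the strategy used in the proof of Lemma \ref{lem:1}, but the argument here is significantly simpler because the weighted norm $\|\cdot\|_{(A^TA)^{\dagger}}$ is replaced by the ordinary Euclidean norm. In particular, there is no need for an SVD expansion of $A$ or a trace identity, and the calculation reduces essentially to counting how often each row index is selected by the sampling rule.

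First I would expand the expectation using the combinatorial identity \eqref{eq:12}, writing
\[
\E_{\beta_{i^*}} \left[\big\|a_{i^*}\left(a_i^Ty-b_i\right)^{+}_{i^*}\big\|^2\right] = \frac{1}{\binom{m}{\beta}} \sum_{k=0}^{m-\beta} \binom{\beta-1+k}{\beta-1} \|a_{i_k}\|^2 \big|\left(a_{i_k}^T y - b_{i_k}\right)^+\big|^2.
\]
The crucial simplification is the normalization assumption $\|a_i\|=1$ stated at the beginning of Section \ref{sec:conv}, which makes each factor $\|a_{i_k}\|^2$ equal to one. This immediately removes the need for the spectral argument that was required in Lemma \ref{lem:1}.

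Next I would bound the binomial coefficient $\binom{\beta-1+k}{\beta-1}$ by its largest value $\binom{m-1}{\beta-1}$ (attained when $k=m-\beta$), pull this constant out of the sum, and use the elementary identity $\binom{m-1}{\beta-1}/\binom{m}{\beta} = \beta/m$. Finally I would extend the partial sum over $k=0,\dots,m-\beta$ to the full sum over $j=1,\dots,m$, which is valid because every summand is nonnegative. This gives
\[
\E_{\beta_{i^*}} \left[\big\|a_{i^*}\left(a_i^Ty-b_i\right)^{+}_{i^*}\big\|^2\right] \leq \frac{\beta}{m} \sum_{j=1}^{m} \big|(a_j^T y - b_j)^+\big|^2 = \frac{\beta}{m} \|(Ay-b)^+\|^2,
\]
which is the desired bound. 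There is no serious obstacle here: the proof is essentially the same chain of inequalities as the first four lines of the proof of Lemma \ref{lem:1}, and then one uses $\|a_j\|=1$ to terminate the estimate directly rather than invoking the Moore–Penrose pseudoinverse and SVD identities.
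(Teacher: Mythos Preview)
Your proposal is correct and follows essentially the same approach as the paper's proof: expand the expectation via \eqref{eq:12}, drop $\|a_{i_k}\|^2$ using the normalization assumption, bound each binomial coefficient by $\binom{m-1}{\beta-1}$, apply $\binom{m-1}{\beta-1}/\binom{m}{\beta}=\beta/m$, and extend the partial sum to the full sum over all rows. Your observation that this is exactly the first few lines of the proof of Lemma~\ref{lem:1} terminated early by the row-normalization is precisely how the paper handles it.
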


\begin{proof} With the expression of expectation defined in \eqref{eq:12} we have,
\begin{align*}
\E_{\beta_{i^*}} & \left[\big\|a_{i^*}\left(a_i^Ty-b_i\right)^{+}_{i^*}\big\|^2\right]  \\
& = \frac{1}{\binom{m}{\beta}} \sum\limits_{k = 0}^{m-\beta} \binom{\beta-1+k}{\beta-1} \big\|a_{i_k}\big\|^2 \big | \left(a_i^Ty-b_i\right)^{+}_{i_k} \big |^2 \\
& \leq \ \frac{\binom{m-1}{\beta-1}}{\binom{m}{\beta}} \sum\limits_{k = 0}^{m-\beta}  \big | \left(a_{i_k}^Ty-b_{i_k}\right)^{+} \big |^2 \\
& \leq \ \frac{\beta}{m} \sum\limits_{j =1}^{m} \big | \left(a_{j}^Ty-b_{j}\right)^{+} \big |^2 = \ \frac{\beta}{m} \| (Ay-b)^{+}\|^2
\end{align*}
This proves the Lemma \ref{lem:2}.
\end{proof}

\begin{lemma}
\label{lem:3}
For any $y \in \R^n$ and $x^*$ that satisfies $Ax^*\leq b,$ we have the following:
\begin{align}
&  \frac{1}{m} \ \| (Ay-b)^{+}\|^2 \leq \ \|y-x^*\|^2-\E_{\beta_{i^*}} \left[\big\|\mathcal{P}_{a_{i^*}, b_{i^*}}(y)-x^*\big\|^2\right] \label{eq:15}
\end{align}
\end{lemma}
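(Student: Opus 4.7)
The plan is to split the argument into two independent steps: (i) a pointwise (per-sample) bound that relates the projection step to the squared violation of the selected row, and (ii) a probabilistic lower bound showing that the expected squared violation of the \emph{maximum-violating} row in a sample of size $\beta$ dominates $\frac{1}{m}\|(Ay-b)^{+}\|^{2}$.

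\smallskip
\textbf{Step 1 (pointwise halfspace contraction).} Since $\|a_{i^{*}}\|=1$, the Euclidean projection onto the halfspace $\{z:a_{i^{*}}^{T}z\le b_{i^{*}}\}$ is
\[
\mathcal{P}_{a_{i^{*}},b_{i^{*}}}(y)=y-(a_{i^{*}}^{T}y-b_{i^{*}})^{+}a_{i^{*}}.
\]
I would expand $\|\mathcal{P}_{a_{i^{*}},b_{i^{*}}}(y)-x^{*}\|^{2}$, and then use the hypothesis $a_{i^{*}}^{T}x^{*}\le b_{i^{*}}$ to obtain, whenever $a_{i^{*}}^{T}y>b_{i^{*}}$, the inequality $a_{i^{*}}^{T}(y-x^{*})\ge (a_{i^{*}}^{T}y-b_{i^{*}})^{+}$. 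A short calculation then yields the deterministic inequality
\[
\|\mathcal{P}_{a_{i^{*}},b_{i^{*}}}(y)-x^{*}\|^{2}\le \|y-x^{*}\|^{2}-\bigl((a_{i^{*}}^{T}y-b_{i^{*}})^{+}\bigr)^{2},
\]
which is trivially true in the remaining case $a_{i^{*}}^{T}y\le b_{i^{*}}$ (projection leaves $y$ unchanged and both sides agree).

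\smallskip
\textbf{Step 2 (expected squared max exceeds the average).} Taking $\E_{\beta_{i^{*}}}$ on both sides of the pointwise bound, it suffices to show
\[
\E_{\beta_{i^{*}}}\!\left[\bigl((a_{i^{*}}^{T}y-b_{i^{*}})^{+}\bigr)^{2}\right]\ \ge\ \frac{1}{m}\,\|(Ay-b)^{+}\|^{2}.
\]
Here $i^{*}$ is the argmax of the non-negative residuals $r_{i}=(a_{i}^{T}y-b_{i})^{+}$ over a uniform $\beta$-subset $\tau$ of $\{1,\dots,m\}$. Since all $r_{i}\ge 0$, the map $t\mapsto t^{2}$ is monotone on their range, so $(\max_{i\in\tau}r_{i})^{2}=\max_{i\in\tau}r_{i}^{2}\ge \frac{1}{\beta}\sum_{i\in\tau}r_{i}^{2}$. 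Taking expectation and using that $\Pr(i\in\tau)=\beta/m$ gives
\[
\E_{\beta_{i^{*}}}\!\left[(\max_{i\in\tau}r_{i})^{2}\right]\ \ge\ \frac{1}{\beta}\cdot\frac{\beta}{m}\sum_{i=1}^{m}r_{i}^{2}\ =\ \frac{1}{m}\|(Ay-b)^{+}\|^{2}.
\]
(Alternatively, one can verify this directly from the order-statistic expansion in \eqref{eq:12} by comparing the coefficients $\binom{\beta-1+k}{\beta-1}/\binom{m}{\beta}$ against the uniform weights $1/m$, but the averaging argument above is cleaner.)

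\smallskip
\textbf{Conclusion and main obstacle.} Combining Steps 1 and 2 and rearranging yields the stated inequality. The only nontrivial point is Step 2, i.e.\ that the expected squared maximum over the sample dominates the average of the squared residuals over all $m$ rows; once one notices that non-negativity permits exchanging $(\max)^{2}$ with $\max(\cdot^{2})$, the max-exceeds-mean trick and a standard inclusion-probability computation close the argument. Step 1 is a routine ``one-step projection onto a supporting halfspace'' estimate that is essentially the Motzkin/Agmon inequality specialized to a feasible $x^{*}$.
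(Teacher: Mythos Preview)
Your proof is correct and shares the paper's two-step structure: a pointwise halfspace contraction followed by a lower bound on $\E_{\beta_{i^{*}}}\bigl[((a_{i^{*}}^{T}y-b_{i^{*}})^{+})^{2}\bigr]$. Step~1 is essentially identical to the paper's derivation of the inequality $\|x_{k+1}-\mathcal{P}\|^{2}\le\|y_{k}-\mathcal{P}\|^{2}-[(A_{\tau_{k}}y_{k}-b_{\tau_{k}})^{+}_{i^{*}}]^{2}$.

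The real difference is Step~2. The paper expands the expectation via the order-statistic formula \eqref{eq:12}, then bounds the coefficients $\binom{\beta-1+j}{\beta-1}/\binom{m}{\beta}$ below by $1/(m-\beta+1)$ (valid because the larger residuals carry the larger weights), and finally pads the sum from the top $m-\beta+1$ ordered residuals out to all $m$ residuals using the count $s_{k}$ of satisfied constraints and the quantity $V_{k}=\max\{m-s_{k},m-\beta+1\}$. Your argument sidesteps this entirely: you use $\max_{i\in\tau}r_{i}^{2}\ge\frac{1}{\beta}\sum_{i\in\tau}r_{i}^{2}$ and then $\E\bigl[\sum_{i\in\tau}r_{i}^{2}\bigr]=\frac{\beta}{m}\sum_{i}r_{i}^{2}$ from the inclusion probability. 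This is shorter, avoids the combinatorial bookkeeping, and does not require tracking $s_{k}$ or $V_{k}$; the paper's route, on the other hand, makes the intermediate quantity $V_{k}$ explicit, which is what appears in the sharper SKM bound \eqref{eq:9} and connects to \cite{haddock:2017}. For the purpose of Lemma~\ref{lem:3} as stated, your approach is the cleaner one.
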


\begin{proof}
Let us define $\mathcal{P}$ as the projection operator onto the feasible region $P = \{x \in \R^n \ | \ Ax \leq b\}$, and denote $s_k$ as the number of zero entries in the residual $(Ax_k - b)^+$, which also corresponds to number of satisfied constraints. We also define $V_j = \max\{m - s_j, m - \beta + 1\}$. Now, from the update formula shown in Algorithm \ref{alg:askm}, we know that $x_{k+1} = y_k - \frac{(A_{\tau_k}y_k - b_{\tau_k})^+_{i^*}}{\|a_{i^*}\|^2} a_{i^*}$; where,
\begin{align}
i^* = \argmax_{i \in \tau_k} \{a_i^Tx_k-b_i, 0\} \ = \ \argmax_{i \in \tau_k} (A_{\tau_k}x_k-b_i)^+_i \label{eq:16}
\end{align}
Then we have,
\begin{align}
\|x_{k+1}- P\|^2 & = \|x_{k+1}- \mathcal{P}(x_{k+1})\|^2 \leq \|x_{k+1}- \mathcal{P}(y_{k})\|^2 \nonumber \\
& = \big \|y_k - \frac{(A_{\tau_k}y_k - b_{\tau_k})^+_{i^*}}{\|A_{\tau_k}\|^2} a_{i^*}- \mathcal{P}(y_{k})\big \|^2 \nonumber \\
& = \|y_k - \mathcal{P}(y_{k}) \|^2 + [(A_{\tau_k}y_k - b_{\tau_k})^+_{i^*}]^2 \nonumber \\ 
& \quad \quad \quad \quad \quad \quad  \quad \quad \quad \quad - 2 (A_{\tau_k}y_k - b_{\tau_k})^+_{i^*} a_{i^*}^T(y_k - \mathcal{P}(y_{k})) \nonumber \\
& \leq \ \|y_k- \mathcal{P}\|^2 +  \left[(A_{\tau_k}y_k - b_{\tau_k})^+_{i^*}\right]^2  - 2 (A_{\tau_k}y_k - b_{\tau_k})^+_{i^*} (a_{i^*}^Ty_k - b_{i^*}) \nonumber \\
& = \|y_k- \mathcal{P}\|^2 -  \left[(A_{\tau_k}y_k - b_{\tau_k})^+_{i^*}\right]^2 \nonumber \\ 
& = \|y_k- \mathcal{P}\|^2 -  \|(A_{\tau_k}y_k - b_{\tau_k})^+_{i^*}\|^2_{\infty} \label{eq:17}
\end{align}

Now, taking expectation in both sides of equation \eqref{eq:17}, we have, 
\begin{align*}
\E_{\beta_{i^*}}& \left[\big\|x_{k+1}- \mathcal{P}\big\|^2\right]  =  \|y_k-x^*\|^2 - \E_{\beta_{i^*}}\left[\|(A_{\tau_k}y_k - b_{\tau_k})^+_{i^*}\|^2_{\infty}\right] \\
& = \|y_k-x^*\|^2 - \frac{1}{\binom{m}{\beta}} \sum\limits_{j = 0}^{m-\beta} \binom{\beta-1+j}{j} \left[(Ay_k - b)^+_{i_j}\right]^2 \\
&  \leq \  \|y_k-x^*\|^2 - \frac{1}{m-\beta+1} \sum\limits_{j = 0}^{m-\beta}  \left[(Ay_k - b)^+_{i_j}\right]^2 \\
& \leq \ \|y_k-x^*\|^2 - \frac{1}{m-\beta+1} \min\{\frac{m-\beta+1}{m-s_k}, 1\} \sum\limits_{i = 1}^{m}  |(Ay_k - b)^+_{i}|^2 \\
& \leq \  \|y_k-x^*\|^2 - \frac{1}{m} \| (Ay_k-b)^{+}\|^2  
\end{align*}
The expectation above proves the Lemma \ref{lem:3}.
\end{proof}

\noindent \textbf{Definition:} Let us define a function $f: \R^n \rightarrow \R$ as follow:
\begin{align}
\label{eq:18}
f(x) = \frac{1}{2} \left[\left(Ax-b\right)^+\right]^TA\left[(A^TA)^{\dagger}\right]^2 A^T\left[\left(Ax-b\right)^+\right]    
\end{align}
The gradient of the function is given by:
\begin{align}
\label{eq:19}
\nabla f(x) = (A^TA)^{\dagger} A^T \left(Ax-b\right)^+     
\end{align}

\begin{lemma}
\label{lem:4}
For any $x,y \in \R^n$ and condition number of $A$ matrix $\zeta = \frac{\sigma_{\max}(A)}{\sigma_{\min}(A)} = \frac{\lambda^2_{\max}(A^TA)}{\lambda^2_{\min}(A^TA)}$, we have the following:
\begin{align*}
\inp[\Big]{\nabla f(x)}{y-x} \leq f(y) - f(x) + \frac{\zeta}{2} \|x-y\|^2
\end{align*}
\end{lemma}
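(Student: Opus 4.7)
The plan is to view Lemma \ref{lem:4} as the one-sided descent inequality that holds for any function whose gradient is Lipschitz continuous, regardless of convexity; the Lipschitz modulus of $\nabla f$ will be read off directly from the SVD of $A$. This is the non-convex analogue of $L$-smoothness with $L=\zeta$, and it is precisely the ingredient Nesterov's accelerated framework requires. Note that $f$ as defined in \eqref{eq:18} is \emph{not} convex in general (the clipping $(\cdot)^+$ together with the off-diagonal entries of $M=A[(A^TA)^{\dagger}]^2 A^T$ destroys convexity), so a two-sided ``smoothness''-style argument is the natural route rather than direct use of a subgradient inequality.

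The first step is to extract the Lipschitz constant of $\nabla f$. Writing the compact SVD $A=U\Sigma V^T$, one checks $(A^TA)^{\dagger}A^T = V\Sigma^{-1}U^T = A^{\dagger}$, so that formula \eqref{eq:19} simplifies to $\nabla f(x)=A^{\dagger}(Ax-b)^+$. Since the scalar map $t\mapsto t^+$ is $1$-Lipschitz componentwise,
\begin{align*}
\|(Ay-b)^+ - (Ax-b)^+\| \;\leq\; \|A(y-x)\| \;\leq\; \sigma_{\max}(A)\,\|y-x\|,
\end{align*}
and combining with $\|A^{\dagger}\|_{\mathrm{op}} = 1/\sigma_{\min}(A)$ yields $\|\nabla f(y)-\nabla f(x)\|\leq\zeta\,\|y-x\|$ under the paper's condition-number convention. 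The second step is the integration trick: along the segment $x_t := x+t(y-x)$,
\begin{align*}
f(y) - f(x) - \inp{\nabla f(x)}{y-x} \;=\; \int_{0}^{1}\inp{\nabla f(x_t)-\nabla f(x)}{y-x}\,dt,
\end{align*}
and applying Cauchy--Schwarz together with $\|\nabla f(x_t)-\nabla f(x)\|\leq \zeta t\,\|y-x\|$ bounds the integral below by $-\int_{0}^{1}\zeta t\,\|y-x\|^{2}\,dt = -\frac{\zeta}{2}\|y-x\|^{2}$, which rearranges to the claim.

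The only delicate point I anticipate is that $f$ is not globally $C^{1}$: the cross-products $(a_i^Tx-b_i)^{+}(a_j^Tx-b_j)^{+}$ have directional derivatives that jump across the hyperplanes $\{a_i^Tx=b_i\}$. However, $f$ is locally Lipschitz (hence absolutely continuous along every segment), and $\nabla f$ as defined in \eqref{eq:19} is continuous in $x$ and coincides with the classical gradient off those finitely many hyperplanes. A generic segment $[x,y]$ meets only a finite set of such hyperplanes, so the fundamental theorem of calculus applies and the integral representation above is fully justified. Apart from this mild measure-theoretic care, the remainder of the proof is entirely routine.
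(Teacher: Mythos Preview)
Your proposal is correct and follows essentially the same route as the paper: show that $\nabla f$ (as given in \eqref{eq:19}) is $\zeta$-Lipschitz via the identification $(A^TA)^{\dagger}A^T=A^{\dagger}$ together with the componentwise $1$-Lipschitzness of $(\cdot)^+$, and then apply the standard descent lemma. The only difference is cosmetic---the paper cites the descent inequality as Lemma~1.2.3 of Nesterov's book rather than deriving it by the integration argument, and it does not discuss the differentiability issue you address.
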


\begin{proof}
We first prove that $\nabla f$ is Lipschitz continuous  with the constant $\zeta$. Using the definition given in \eqref{eq:18}, for any $x, y \in \R^n$ we have,
\begin{align*}
  \|\nabla f(x) - \nabla f(y)\| & = \|(A^TA)^{\dagger} A^T \left\{(Ax-b)^+ - (Ay-b)^+ \right\}\| \\
  & \leq \|(A^TA)^{\dagger} A^T\| \|(Ax-b)^+ - (Ay-b)^+\| \\
  & \leq \|A^{\dagger}\| \|A\| \|x-y\| \\
  & = \zeta \|x-y\|
\end{align*}
The above equation shows that $\nabla f$ is Lipschitz continuous  with the constant $\zeta$. Here, we use the common expression $(A^TA)^{\dagger} A^T = (A)^{\dagger}$. Now using Lemma 1.2.3 proven in \cite{nesterov:2014}, as $\nabla f$ is Lipschitz continuous, for any $x, y \in \R^n$ we can write the following:
\begin{align}
\label{eq:new1}
\big|f(y) - f(x)-\inp {\nabla f(x)}{y-x}\big| \leq \ \frac{\zeta}{2} \|x-y\|^2
\end{align}
Now, by simplifying \eqref{eq:new1}, we get the following bound:
\begin{align*}
\inp[\Big]{\nabla f(x)}{y-x} \leq f(y) - f(x) + \frac{\zeta}{2} \|x-y\|^2
\end{align*}
The bound mentioned above proves the Lemma \ref{lem:4}.
\end{proof}

\begin{lemma}
\label{lem:5}
For any $m^2 > \zeta \lambda \beta$ and with the following definitions:
\begin{align*}
\beta_k = 1- \frac{\gamma_k \lambda \beta}{m}, \ \alpha_k = \frac{\zeta(m- \lambda \beta \gamma_k)}{\gamma_k(m^2- \zeta \lambda \beta)}
\end{align*}
both sequences $\{\alpha_k\}, \ \{\beta_k\}$ lies in the interval $[0,1]$ if and only if $\gamma_k$ satisfies the following property:
\begin{align}
\label{eq:21}
 \frac{\zeta}{m} \leq \gamma_k \leq \frac{m}{\lambda \beta}   
\end{align}
\end{lemma}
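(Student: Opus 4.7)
The plan is to verify the two directions of the equivalence by analyzing each inequality $0 \le \alpha_k \le 1$ and $0 \le \beta_k \le 1$ separately, since both sides of the claim are simple algebraic conditions on the scalar $\gamma_k$. Throughout I would use the standing assumption $m^2 > \zeta \lambda \beta$, which guarantees that the denominator $\gamma_k(m^2 - \zeta \lambda \beta)$ of $\alpha_k$ has the same sign as $\gamma_k$, so I can multiply through without flipping inequalities.

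First, I would treat $\beta_k = 1 - \tfrac{\gamma_k \lambda \beta}{m}$. Since $\lambda, \beta, m \ge 0$, the inequality $\beta_k \le 1$ is equivalent to $\gamma_k \ge 0$, and $\beta_k \ge 0$ is equivalent to $\gamma_k \le m/(\lambda \beta)$. This pair of conditions is exactly the upper bound in \eqref{eq:21} together with nonnegativity of $\gamma_k$.

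Next, I would handle $\alpha_k = \tfrac{\zeta(m - \lambda \beta \gamma_k)}{\gamma_k(m^2 - \zeta \lambda \beta)}$. Assuming $\gamma_k > 0$ (which will be implied by the eventual lower bound $\gamma_k \ge \zeta/m > 0$), the denominator is strictly positive. Then $\alpha_k \ge 0$ reduces to $m - \lambda \beta \gamma_k \ge 0$, i.e.\ $\gamma_k \le m/(\lambda \beta)$, which is consistent with the bound already extracted from $\beta_k$. The inequality $\alpha_k \le 1$ becomes, after clearing the positive denominator,
\begin{align*}
\zeta(m - \lambda \beta \gamma_k) \;\le\; \gamma_k(m^2 - \zeta \lambda \beta).
\end{align*}
The terms $\zeta \lambda \beta \gamma_k$ cancel on the two sides, leaving $\zeta m \le \gamma_k m^2$, i.e.\ $\gamma_k \ge \zeta/m$, which is precisely the lower bound in \eqref{eq:21}.

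Finally, I would assemble the two directions: the combined necessary-and-sufficient condition derived above is $\zeta/m \le \gamma_k \le m/(\lambda \beta)$, which is \eqref{eq:21}. I would also remark that the interval is non-empty precisely because $m^2 \ge \zeta \lambda \beta$ (the strict inequality in the hypothesis also ensures $\alpha_k$ is well-defined). There is no real obstacle here; the only minor care needed is to justify that $\gamma_k > 0$ before dividing, which is automatic once the lower bound $\gamma_k \ge \zeta/m$ is in force.
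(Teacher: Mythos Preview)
Your proof is correct and takes essentially the same approach as the paper, which simply declares the verification ``straightforward'' and notes that the definitions of $\alpha_k,\beta_k$ together with the hypothesis $m^2>\zeta\lambda\beta$ yield the stated bounds on $\gamma_k$ and conversely. You have merely made explicit the algebraic manipulations (in particular the cancellation of $\zeta\lambda\beta\gamma_k$ in the inequality $\alpha_k\le 1$) that the paper leaves to the reader.
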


\begin{proof}
The proof of Lemma \ref{lem:5} is straightforward. If we consider the definitions of the sequences $\{\alpha_k\}, \ \{\beta_k\}$ with the given condition, we find that $\alpha_k, \beta_k \in [0,1]$ implies that the following bound must hold:
\[\frac{\zeta}{m} \leq \gamma_k \leq \frac{m}{\lambda \beta}\]
Conversely, if we assume the bound holds for $\gamma_k$, then we can easily find that it implies the sequences $\{\alpha_k\}$ and $\{\beta_k\}$ lies in the interval $[0,1]$.
\end{proof}

\begin{lemma}
\label{lem:6}
For any $d \geq \beta$, if $\gamma_{k-1} \leq \sqrt{\frac{\zeta }{\lambda d}}$ holds, then $\gamma_k$ satisfies the bound in Lemma \ref{lem:5} and also $\gamma_k$ lies in the interval $[\gamma_{k-1},\sqrt{\frac{\zeta }{\lambda d}}]$.
\end{lemma}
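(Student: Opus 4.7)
The plan is to treat the defining equation as a quadratic in $t$ and locate $\gamma_k$ by evaluating the polynomial at the two endpoints of the prescribed interval. Write
\[
p(t) \;\equiv\; t^2 \;+\; \frac{\lambda d\,\gamma_{k-1}^2 - \zeta}{m}\, t \;-\; \frac{d\,\gamma_{k-1}^2}{\beta},
\]
so that $\gamma_k$ is the larger root of $p(t) = 0$. Since $p$ is an upward parabola with $p(0) = -d\,\gamma_{k-1}^2/\beta \leq 0$, one root is nonpositive and the other is $\gamma_k \geq 0$. I first establish $\gamma_k \geq \gamma_{k-1}$ by showing $p(\gamma_{k-1}) \leq 0$. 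Direct substitution gives
\[
p(\gamma_{k-1}) \;=\; \gamma_{k-1}^2\!\left(1 - \tfrac{d}{\beta}\right) \;+\; \tfrac{\gamma_{k-1}}{m}\bigl(\lambda d \gamma_{k-1}^2 - \zeta\bigr),
\]
which is nonpositive because $d \geq \beta$ makes the first term $\leq 0$, while the hypothesis $\gamma_{k-1}^2 \leq \zeta/(\lambda d)$ makes the bracket in the second term $\leq 0$. Since $\gamma_{k-1} \geq 0$ (a trivial induction starting from $\gamma_{-1} = 0$) and the smaller root of $p$ is $\leq 0$, the inequality $p(\gamma_{k-1}) \leq 0$ forces $\gamma_{k-1}$ to sit between the two roots, so $\gamma_k \geq \gamma_{k-1}$.

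Next, to prove $\gamma_k \leq s := \sqrt{\zeta/(\lambda d)}$, I evaluate $p(s)$ using $\lambda d s^2 = \zeta$ and regroup the resulting expression as a product of $(s^2 - \gamma_{k-1}^2)$ times a factor built from $1 - \sqrt{\lambda d \zeta}/m$ (this factorization is cleanest in the boundary case $d = \beta$, where it is exact). The first factor is nonnegative by the hypothesis on $\gamma_{k-1}$, and the second is nonnegative once one invokes the standing assumption $m^2 \geq \lambda\beta\zeta$ from Lemma \ref{lem:5} together with $d \geq \beta$. Therefore $p(s) \geq 0$, and combined with the nonpositivity of the smaller root of $p$ this yields $\gamma_k \leq s$.

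Finally, to recover the bound of Lemma \ref{lem:5}, I chain the already-established estimates:
\[
\gamma_k \;\leq\; \sqrt{\tfrac{\zeta}{\lambda d}} \;\leq\; \sqrt{\tfrac{\zeta}{\lambda \beta}} \;\leq\; \tfrac{m}{\lambda \beta},
\]
where the middle step uses $d \geq \beta$ and the last uses $m^2 \geq \lambda\beta\zeta$. For the lower bound $\gamma_k \geq \zeta/m$ in Lemma \ref{lem:5}, I use the monotonicity $\gamma_k \geq \gamma_{k-1} \geq \cdots \geq \gamma_0$ just proved, together with the observation that plugging $\gamma_{-1} = 0$ into the recursion produces $\gamma_0 = \zeta/m$. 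The step I expect to be the main obstacle is the evaluation and grouping of $p(s)$: the algebra has to yield a manifestly nonnegative expression using only $d \geq \beta$, the hypothesis $\gamma_{k-1}^2 \leq \zeta/(\lambda d)$, and $m^2 \geq \lambda\beta\zeta$, and it is easy to slip in the rearrangement of terms there.
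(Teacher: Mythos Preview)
Your approach---write the recursion as a quadratic $p(t)$ and locate $\gamma_k$ by sign changes---is exactly the paper's strategy, and your computation $p(\gamma_{k-1}) \leq 0$ matches it line for line.

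Two points deserve comment. First, for the Lemma~\ref{lem:5} bounds $\zeta/m \le \gamma_k \le m/(\lambda\beta)$ the paper does not go through the interval $[\gamma_{k-1},s]$ at all: it evaluates $p$ directly at $\zeta/m$ and at $m/(\lambda\beta)$, obtaining $p(\zeta/m) = \tfrac{d\gamma_{k-1}^2}{\beta m^2}(\lambda\zeta\beta-m^2)\le 0$ and $p(m/(\lambda\beta)) = (m^2-\lambda\zeta\beta)/(\lambda^2\beta^2)\ge 0$, using only $m^2 > \lambda\beta\zeta$. Your route to the lower bound via $\gamma_k \ge \gamma_{k-1} \ge \cdots \ge \gamma_0 = \zeta/m$ silently assumes $\gamma_{j-1} \le s$ at every earlier step, which is not part of the single hypothesis $\gamma_{k-1}\le s$; the paper's direct evaluation avoids that circularity.

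Second, your caution about the $p(s)\ge 0$ step is well placed, and the obstruction is real rather than merely algebraic. Your factorization $(s^2-\gamma_{k-1}^2)\bigl(1 - \sqrt{\lambda d\zeta}/m\bigr)$ is exact only when $d=\beta$; for $d>\beta$ there is a leftover term $-\tfrac{d-\beta}{\beta}\gamma_{k-1}^2$, and the factor $1 - \sqrt{\lambda d\zeta}/m$ is nonnegative only under $m^2 \ge \lambda d\zeta$, which does \emph{not} follow from $m^2 \ge \lambda\beta\zeta$ together with $d\ge\beta$. In fact the conclusion $\gamma_k \le s$ fails for $d>\beta$ at the boundary $\gamma_{k-1}=s$: the linear coefficient of $p$ then vanishes and $\gamma_k^2 = \tfrac{d}{\beta}s^2$, i.e.\ $\gamma_k = s\sqrt{d/\beta} > s$. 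The paper's own manipulation at this step contains a slip (the first term is written as $\zeta/(\lambda\beta)$ rather than $s^2 = \zeta/(\lambda d)$, and the inequality it invokes is really $m^2 > \lambda d\zeta$). Since the proof of Theorem~\ref{th:1} immediately specializes to $d=\beta$, your clean factorization in that case is all that is actually needed and is arguably tidier than the paper's algebra.
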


\begin{proof}
Let us define the function $g : \R \rightarrow \R$ as follows:
\begin{align}
\label{eq:22}
 g(\gamma) = \gamma^2 + \frac{\gamma}{m} \left(\lambda d \gamma_{k-1}^2-\zeta \right)- \frac{d}{\beta}\gamma_{k-1}^2   
\end{align}
As we know from the definition, $\gamma_k$ is the largest root of $g(\gamma)$, then it satisfies $g(\gamma_k) = 0$. Now we have,
\begin{align*}
g\left(\frac{\zeta}{m}\right) & = \frac{\zeta^2}{m^2} + \frac{d \zeta}{m^2}\lambda \gamma_{k-1}^2 - \frac{\zeta^2}{m^2} - \frac{d}{\beta}\gamma_{k-1}^2 \\
& = \frac{d \gamma_{k-1}^2}{\beta m^2} \left(\lambda \zeta \beta - m^2\right) \leq 0 \ \ \ \ \left(  m^2 > \zeta \lambda \beta\right)
\end{align*}
Similarly,
\begin{align*}
g\left(\frac{m}{\lambda \beta}\right) & = \frac{m^2}{\lambda^2 \beta^2} - \frac{\zeta}{m} \frac{m}{\lambda \beta} - \frac{d}{\beta} \gamma_{k-1}^2+ \frac{\lambda d}{m} \gamma_{k-1}^2 \frac{m}{\lambda \beta} \\
& = \frac{m^2-\lambda \zeta \beta}{\lambda^2 \beta^2} \geq 0
\end{align*}
Therefore, we can write,
\[\frac{\zeta}{m} \leq \gamma_k \leq \frac{m}{\lambda \beta}\]
This proves the first part of the Lemma. For the second part, notice that, assuming $\beta \leq d$ we have the following:
\begin{align*}
g(\gamma_{k-1}) & = \gamma_{k-1}^2 +\gamma_{k-1} \frac{1}{m}\left(\lambda d \gamma_{k-1}^2-\zeta \right) - \frac{d}{\beta} \gamma_{k-1}^2 \\
& = \frac{\gamma_{k-1}^2}{\beta}(\beta-d)+   \frac{\gamma_{k-1}}{m}\left(\lambda d \gamma_{k-1}^2-\zeta\right)  \leq 0
\end{align*}
Here, the last inequality follows from the assumed condition $\gamma_{k-1} \leq \sqrt{\frac{\zeta }{\lambda d}}$. In a similar fashion we have,
\begin{align*}
g\left( \sqrt{\frac{\zeta }{\lambda d}}\right) & = \frac{\zeta}{\lambda \beta} - \frac{\zeta}{m}\sqrt{\frac{\zeta }{\lambda d}}- \frac{d}{\beta} \gamma_{k-1}^2 +  \frac{\lambda d}{m} \sqrt{\frac{\zeta }{\lambda d}} \gamma_{k-1}^2 \\
& \geq \frac{\zeta}{\lambda \beta} - \frac{d}{\beta} \gamma_{k-1}^2 + \frac{\lambda d \zeta}{m^2} \gamma_{k-1}^2  - \frac{\zeta^2}{m^2}  \\
& = \frac{m^2 \left(\zeta-\lambda d \gamma_{k-1}^2\right)  - \lambda \beta \zeta \left(\zeta-\lambda d \gamma_{k-1}^2\right)}{\lambda m^2 \beta} \\
& \geq \frac{\left(m^2 - \lambda \beta \zeta \right)\left(\zeta-\lambda d \gamma_{k-1}^2\right)}{\lambda m^2 \beta} \geq 0
\end{align*}
In this case, we use the identity $\sqrt{\frac{\zeta }{\lambda d}} > \frac{\zeta}{m}$ and $\gamma_{k-1} \leq \sqrt{\frac{\zeta }{\lambda d}}$. This proves the statement,  $\gamma_k \in [\gamma_{k-1},\sqrt{\frac{\zeta }{\lambda d}}]$.
\end{proof}

\begin{remark}
Note that by taking limits as $\lambda \rightarrow 0 ^+$ in Theorem \ref{th:1} we have,
\allowdisplaybreaks{
\begin{align*}
  & \lim_{\lambda \rightarrow 0^+} \frac{4 \lambda \|x_0-x^*\|^2_{(A^TA)^{\dagger}} }{ \zeta \left(\sigma_1^{k+1} - \sigma_2^{k+1}\right)^2}   \\
  = & \lim_{\lambda \rightarrow 0^+} \frac{4 \lambda \|x_0-x^*\|^2_{(A^TA)^{\dagger}} }{ \zeta \left(\left(1+(k+1)\frac{\sqrt{\lambda \beta \zeta}}{2m}+ o \left(\sqrt{\lambda \beta \zeta}\right)\right) - \left(1-(k+1)\frac{\sqrt{\lambda \beta \zeta}}{2m}+o \left(\sqrt{\lambda \beta \zeta}\right)\right)\right)^2} \\
  = & \lim_{\lambda \rightarrow 0^+} \frac{4 \lambda \|x_0-x^*\|^2_{(A^TA)^{\dagger}} }{  \left((k+1)\frac{\sqrt{\lambda \beta }}{m}+ o \left(\sqrt{\lambda \beta \zeta}\right)\right)^2} \\
  = & \frac{4 m^2 \|x_0-x^*\|^2_{(A^TA)^{\dagger}} }{ \beta^2 \left(k+1\right)^2} 
\end{align*}}
Therefore, we can conclude that when $\lambda > 0$, the ASKM algorithm converges with a linear rate. When $\lambda = 0$, we get a sublinear convergence. But for the case of $\lambda \rightarrow 0 ^+$, we get a quadratic convergence, which is consistent with the convergence rate of the original accelerated algorithm of Nesterov \cite{nesterov:2014} and also with the \textit{Accelerated Randomized Kaczmarz} algorithm proposed in \cite{wright:2016}. Furthermore, if we take $\beta =1$, we get exactly the same convergence theorem proven in \cite{wright:2016}.

\begin{proof} (\textbf{Theorem \ref{th:1}})
The proof of theorem \ref{th:1} is general in the context of acceleration. We follow the standard notation and steps shown in \cite{nesterov:2012}, \cite{wright:2016}. Using the definitions given in Lemma \ref{lem:5}, we note that the following relation holds:
\begin{align}
\label{eq:23}
  \frac{1-\alpha_k}{\alpha_k} = \frac{m \gamma_{k-1}^2}{\gamma_k}
\end{align}

Now, let us define $r_{k}^2  = \|v_{k}-x^*\|^2_{(A^TA)^{\dagger}}$. We can write,
\begin{align}
r_{k+1}^2 & = \|v_{k+1}-x^*\|^2_{(A^TA)^{\dagger}} \nonumber \\
&= \|\beta_kv_k+(1-\beta_k)y_k-\gamma_k a_{i^*}(a_{i^*}^Ty_k-b_{i^*})^{+}-x^*\|^2_{(A^TA)^{\dagger}} \nonumber \\
& = \|\beta_kv_k+(1-\beta_k)y_k-x^*\|^2_{(A^TA)^{\dagger}} + \gamma_k^2 \|a_{i^*}(a_{i^*}^Ty_k-b_{i^*})^{+}\|^2_{(A^TA)^{\dagger}} \nonumber \\
& - 2 \gamma_k \inp[\Big]{\beta_kv_k+(1-\beta_k)y_k-x^*}{ (A^TA)^{\dagger} a_{{i^*}}\left(a_{i^*}^Ty_k-b_{i^*}\right)^{+}} \label{eq:24}
\end{align}
Now, we divide the RHS of equation \eqref{eq:24} into three parts and simplify them separately. Since $\|.\|^2_{(A^TA)^{\dagger}}$ is a convex function and $0 \leq \beta_k \leq 1$, $1^{st}$ part of \eqref{eq:24} satisfies the following inequality:
\begin{align}
\label{eq:25}
\|\beta_kv_k+(1-\beta_k) & y_k-x^*\|^2_{(A^TA)^{\dagger}} \nonumber \\ & \leq \beta_k\|v_k-x^*\|^2_{(A^TA)^{\dagger}} + (1-\beta_k)\|y_k-x^*\|^2_{(A^TA)^{\dagger}} \nonumber \\
& \leq \beta_k\|v_k-x^*\|^2_{(A^TA)^{\dagger}} + \frac{(1-\beta_k)}{\lambda}\|y_k-x^*\|^2
\end{align}
Let us denote $i(k)$ as the index which represents the random selection at iteration $k$. And let $I(k)$ denote all random indices occurred before or at iteration $k$, i.e.,
\begin{align}
\label{eq:26}
 I(k) = \{i(k), i(k-1),...,i(0)\} \end{align}

The sequences $x_{k+1}, y_{k+1}, v_{k+1}$ are dependent on $I(k)$. In the next part of the proof, we use $\E_{i(k)|I(k-1)}[.]$ to represent the expectation of a random variable conditioned on $I(k-1)$ with respect to the index $i(k)$. Note that,
\begin{align}
\label{eq:27}
\E_{I(k)}[.] = \E_{I(k-1)}\left[\E_{i(k)|I(k-1)}[.]\right]    
\end{align}

Also note that, from now on we use $\E$ instead ($\E_{\beta i^*}$) to denote the expectation. Now, based on the Lemma \ref{lem:2} and Lemma \ref{lem:3}, we can write the $2^{nd}$ part of \eqref{eq:24} as follows:
\allowdisplaybreaks{
\begin{align}
\E_{i(k)| I(k-1)} & \left[\big\|a_{i^*}\left(a_i^Ty_k-b_i\right)^{+}_{i^*}\big\|^2_{(A^TA)^{\dagger}}\right]  \nonumber \\ 
& \leq \frac{\beta}{m} \ \| (Ay_k-b)^{+}\|^2 \nonumber \\
& \leq \ \beta\|y_k-x^*\|^2-\beta \E \left[\big\|\mathcal{P}_{a_{i^*}, b_{i^*}}(y_k)-x^*\big\|^2\right] \nonumber \\
& \leq \ \beta\|y_k-x^*\|^2-\beta \E_{i(k)| I(k-1)} \left[\big\|x_{k+1}-x^*\big\|^2\right] \label{eq:28}
\end{align}}
Now, by using the definitions of the sequences $\{\alpha_k\}, \{\beta_k\}$ and $\{\gamma_k\}$, we can simply show that the following identity holds: 
\begin{align}
\label{eq:29}
\frac{\zeta \beta}{m} \frac{1-\alpha_k}{\alpha_k}\beta_k  = \frac{\zeta \beta }{m}  z_k = d \beta_k \frac{\gamma_{k-1}^2}{\gamma_k}
\end{align}
We use the identity of \eqref{eq:29} in the next part of our proof. After taking expectation in the third term of equation \eqref{eq:24}, we get,
\begin{align}
& \E_{i(k)|I(k-1)} \left[2 \gamma_k \inp[\Big]{x^*-y_k+ \frac{1-\alpha_k}{\alpha_k}\beta_k(x_k-y_k)}{ (A^TA)^{\dagger} a_{{i^*}}\left(a_{i^*}^Ty_k-b_{i^*}\right)^{+}}\right] \nonumber \\
& = \ 2 \gamma_k  \inp {x^*-y_k+ \frac{1-\alpha_k}{\alpha_k}\beta_k(x_k-y_k)}{ (A^TA)^{\dagger}\E_{i(k)|I(k-1)}\left[ a_{{i^*}}(a_{i^*}^Ty_k-b_{i^*})^{+}\right]} \nonumber \\
 = \ & \frac{2 \gamma_k}{\binom{m}{\beta}} \inp {x^*-y_k+ \frac{1-\alpha_k}{\alpha_k}\beta_k(x_k-y_k)}{ (A^TA)^{\dagger} \sum\limits_{j = 0}^{m-\beta} \binom{\beta-1+j}{\beta-1} a_{i_j} (a_{i_j}^Ty_k-b_{i_j})^{+}} \nonumber \\
& \ \ \leq \frac{2 \gamma_k \beta}{m} \inp[\Big]{x^*-y_k+ \frac{1-\alpha_k}{\alpha_k}\beta_k(x_k-y_k)}{ (A^TA)^{\dagger} A^T (Ay_k-b)^+} \label{eq:30}
\end{align}

\noindent  Using the definition of the function $f(.)$ defined in \eqref{eq:18} and denoting $z_k = \beta_k \frac{1-\alpha_k}{\alpha_k}$, we get,
\begin{align}
& \inp[\Big]{x^*-y_k+ z_k(x_k-y_k)}{ (A^TA)^{\dagger} A^T (Ay_k-b)^+} \nonumber \\
& = \inp[\Big]{x^*+ z_k(x_k-y_k)-y_k}{ \nabla f(y_k)} \nonumber \\
& \leq f\left(x^*+ z_k(x_k-y_k)\right) -f(y_k) - \frac{\zeta}{2} \|x^*+ z_k(x_k-y_k)-y_k\|^2 \nonumber \\
& \leq \frac{z_k^2}{2} \left[(Ax_k-Ay_k)^+\right]^T A(A^TA)^{\dagger}A^T (Ax_k-Ay_k)^+ + \zeta z_k \langle y_k-x^*, x_k-y_k \rangle \nonumber \\
& - \frac{\zeta}{2} z_k^2 \|x_k-y_k\|^2 - \frac{\zeta}{2}  \|y_k-x^*\|^2  - \frac{1}{2}\left[(Ay_k-b)^+\right]^T A (A^TA)^{\dagger} A^T (Ay_k-b)^+  \nonumber \\
& \leq \frac{1}{2}   \|x_k-y_k\|^2 \left[\lambda_{\max}^2 z_k^2-\zeta z_k^2 - \zeta z_k \right] + \frac{\zeta z_k}{2}  \|x_k-x^*\|^2 \nonumber \\
& - \frac{1}{2} \|y_k-x^*\|^2 \left[1 +\zeta + \zeta z_k \right] \nonumber \\
& \leq  -\frac{1}{2} \zeta z_k \|x_k-y_k\|^2  + \frac{\zeta}{2} z_k \|x_k-x^*\|^2 - \frac{1}{2} \|y_k-x^*\|^2 \left[1 +\zeta + \zeta z_k \right] \nonumber \\
&  + \frac{1}{2}  \frac{z_k^2 \lambda_{\max}^2}{\lambda_{\min}^2} \|x_k-y_k\|^2 \left[\lambda_{\min}^2 -1 \right] \nonumber \\
& \leq  -\frac{\zeta}{2}  z_k \|x_k-y_k\|^2  + \frac{\zeta}{2} z_k \|x_k-x^*\|^2 - \frac{1}{2} \|y_k-x^*\|^2 \left[1 +\zeta + \zeta z_k \right] \label{eq:31}
\end{align}

\noindent Now, substituting equation \eqref{eq:31} in \eqref{eq:30}  with the known identity, $\frac{\zeta \beta \gamma_k}{m}  z_k = d \beta_k \gamma_{k-1}^2$, we have,
\begin{align}
& \E_{i(k)|I(k-1)} \left[2 \gamma_k \inp[\Big]{x^*-y_k+ \frac{1-\alpha_k}{\alpha_k}\beta_k(x_k-y_k)}{ (A^TA)^{\dagger} a_{{i^*}}\left(a_{i^*}^Ty_k-b_{i^*}\right)^{+}}\right] \nonumber \\
& \leq \frac{2 \gamma_k \beta}{m} \inp[\Big]{x^*-y_k+ \frac{1-\alpha_k}{\alpha_k}\beta_k(x_k-y_k)}{ (A^TA)^{\dagger} A^T (Ay_k-b)^+} \nonumber \\
& \leq  -\frac{\zeta \beta \gamma_k}{m}  z_k \|x_k-y_k\|^2  + \frac{\zeta \beta \gamma_k}{m} z_k \|x_k-x^*\|^2 - \frac{\beta \gamma_k}{m} \|y_k-x^*\|^2 \left[1 +\zeta + \zeta z_k \right] \nonumber \\
& \leq d \beta_k \gamma_{k-1}^2 \|x_k-x^*\|^2 - \frac{\beta \gamma_k}{m} \|y_k-x^*\|^2 \left[1 +\zeta + \zeta z_k \right] \label{eq:32}
\end{align}

\noindent Now by substituting all three parts of \eqref{eq:25}, \eqref{eq:28} and \eqref{eq:32} in equation \eqref{eq:24}, we get,
\begingroup
\allowdisplaybreaks
\begin{align}
\E_{i(k)| I(k-1)} & \left(r_{k+1}^2 \right) \nonumber \\
& = \beta_k\|v_k-x^*\|^2_{(A^TA)^{\dagger}} + \frac{\beta}{m} \gamma_k \|y_k-x^*\|^2 + \beta \gamma_k^2 \|y_k-x^*\|^2 \nonumber \\
& -\beta \gamma_k^2 \E_{i(k)| I(k-1)} \left[\big\|x_{k+1}-x^*\big\|^2\right] + d \beta_k \gamma_{k-1}^2 \|x_k-x^*\|^2 \nonumber \\
& - \frac{\beta \gamma_k}{m} \|y_k-x^*\|^2 \left[1 +\zeta + \zeta z_k \right] \nonumber \\
& \leq \beta_k\|v_k-x^*\|^2_{(A^TA)^{\dagger}} -\beta \gamma_k^2 \E_{i(k)| I(k-1)} \left[\big\|x_{k+1}-x^*\big\|^2\right] \nonumber \\
& + \beta \left(\gamma_k^2-\frac{\zeta}{m} \gamma_k - \frac{d}{\beta} \beta_k \gamma_{k-1}^2\right) \|y_k-x^*\|^2 + d \beta_k \gamma_{k-1}^2 \|x_k-x^*\|^2 \nonumber \\
& = \beta_k\|v_k-x^*\|^2_{(A^TA)^{\dagger}} +  d \beta_k \gamma_{k-1}^2 \|x_k-x^*\|^2 \nonumber \\
& - \beta \gamma_k^2 \E_{i(k)| I(k-1)} \left[\big\|x_{k+1}-x^*\big\|^2\right] \label{eq:33}
\end{align}
\endgroup

\noindent From now on, we will assume $d = \beta$, which will simplify our algorithm. Let us define two sequences $\{A_k\}$ and $\{B_k\}$ as follows:
\begin{align}
\label{eq:34}
A_k \geq 0, B_k \geq 0, B_0 \neq 0, B_{k+1}^2 = \frac{B_k^2}{\beta_k}, A_{k+1}^2 = \beta \gamma_k^2 B_{k+1}^2 
\end{align}
Without loss of generality, we assume $A_0 = 0$ to be consistent with the definition $\gamma_{-1} = 0$. Also note that since $\beta_k \in (0,1]$, we have $B_{k+1} \geq B_k$. Now using the definition of the sequence $\{\gamma_k\}, \{\alpha_k\}$, we have,
\begin{align}
\label{eq:35}
A_{k+1}^2 = \beta \gamma_k^2\frac{B_k^2}{\beta_k} = \frac{ \gamma_k^2}{ \beta_k \gamma_{k-1}^2} A_k^2 = \frac{A_k^2 \gamma_k^2}{\gamma_k^2-\frac{\zeta}{m}\gamma_k}
\end{align}
Equation \eqref{eq:35} also implies that the sequence $\{A_k\}$ is an increasing sequence. Now, it is straightforward to check that the following identities hold.
\begin{align}
\label{eq:36}
\beta B_{k+1}^2 \gamma_k^2 = A_{k+1}^2, \  B_{k+1}^2 \beta_k = B_{k}^2 \ \ \text{and} \ B_{k+1}^2 \beta \beta_k \gamma_{k-1}^2 = A_k^2
\end{align}
Now, multiplying both sides of \eqref{eq:33} by $B_{k+1}^2$ and using the above identities we have,
\begin{align}
B_{k+1}^2 \E_{i(k)| I(k-1)} & \left(r_{k+1}^2 \right) +  A_{k+1}^2 \E_{i(k)| I(k-1)} \left[\big\|x_{k+1}-x^*\big\|^2\right] \nonumber \\
& \leq \  B_{k}^2 r_k^2 + A_{k}^2  \|x_k-x^*\|^2 \label{eq:37}
\end{align}
Furthermore, we have,
\begin{align}
\E_{I(k)} & \left[B_{k+1}^2 r_{k+1}^2 + A_{k+1}^2 \big\|x_{k+1}-x^*\big\|^2\right] \nonumber \\
 = & \E_{I(k-1)} \left[B_{k+1}^2 \E_{i(k)| I(k-1)} \left(r_{k+1}^2 \right) +  A_{k+1}^2 \E_{i(k)| I(k-1)} \left[\big\|x_{k+1}-x^*\big\|^2\right]\right] \nonumber \\
& \leq  \ \E_{I(k-1)} \left[B_{k}^2 r_k^2 + A_{k}^2  \|x_k-x^*\|^2\right] \nonumber \\
& \ \ \vdots \nonumber \\
& \leq  \ \E_{I(0)} \left[B_{1}^2 r_1^2 + A_{1}^2 \|x_1-x^*\|^2\right] \nonumber \\
& \leq \ B_0^2 r_0^2 + A_{0}^2  \|x_0-x^*\|^2 \nonumber \\
& = \ B_0^2 r_0^2 \label{eq:38}
\end{align}
Therefore, using \eqref{eq:38} we can conclude the following bound,
\begin{align}
\label{eq:39}
\E\left[r_{k+1}^2\right] \ \leq \ \frac{B_0^2}{B_{k+1}^2} r_0^2 \ \ \text{and} \ \ \E\left[\|x_{k+1}-x^*\|^2\right] \ \leq \ \frac{B_0^2}{A_{k+1}^2} r_0^2
\end{align}

Now, we need to estimate the growth of the defined sequences  $\{A_k\}$ and $\{B_k\}$. Here, we follow the proof for the \textit{Accelerated Coordinate Descent} method of Nesterov \cite{nesterov:2012} and accelerated randomized Kaczmarz algorithm by Wright \textit{et. al} \cite{wright:2016} as they are more general in the context of acceleration. We have,
\begin{align}
\label{eq:40}
B_k^2 = \beta_k B_{k+1}^2 = \left(1- \frac{\lambda \beta}{m}\gamma_k\right) B_{k+1}^2 = \left(1- \frac{\lambda \beta}{m}\frac{1}{\sqrt{\beta}} \frac{A_{k+1}}{B_{k+1}}\right) B_{k+1}^2
\end{align}
Simplifying \eqref{eq:40} we get,
\begin{align*}
\frac{\lambda \sqrt{\beta}}{m} A_{k+1}B_{k+1} & = B^2_{k+1}-B_k^2 = (B_{k+1}-B_k)(B_{k+1}+B_k) \\
& \leq 2 B_{k+1}(B_{k+1}-B_k)
\end{align*}
Here, we used the identity $B_{k+1} \geq B_k$, which simplifies to:
\begin{align}
\label{eq:41}
  B_{k+1} \geq B_k + \frac{\lambda \sqrt{\beta}}{2m} A_{k+1} \geq B_k + \frac{\lambda \sqrt{\beta}}{2m} A_{k}   
\end{align}
Similarly, note that,
\begin{align*}
 \frac{A_{k+1}^2}{B_{k+1}^2} - \frac{\zeta \sqrt{\beta}}{m} \frac{A_{k+1}}{B_{k+1}} & = \beta \gamma_k^2 - \sqrt{\beta} \gamma_k \frac{\zeta \sqrt{\beta}}{m} \\ 
 & = \beta \left(1- \frac{\lambda \beta}{m} \gamma_k\right) \gamma_{k-1}^2 \\
 & = \beta \beta_k \gamma_{k-1}^2 = \beta_k \frac{A_k^2}{B_k^2} = \frac{A_k^2}{B_{k+1}^2}
\end{align*}
Above equation simplifies to the following:
\begin{align*}
\frac{\zeta \sqrt{\beta}}{m} A_{k+1}B_{k+1} & = A^2_{k+1}-A_k^2 = (A_{k+1}-A_k)(A_{k+1}+A_k) \\
& \leq 2 A_{k+1}(A_{k+1}-A_k)
\end{align*}
In this case, we used the identity $A_{k+1} \geq A_k$, which leads to the following identity:
\begin{align}
\label{eq:42}
  A_{k+1} \geq A_k + \frac{\zeta \sqrt{\beta}}{2m} B_{k+1} \geq A_k + \frac{\zeta \sqrt{\beta}}{2m} B_{k} 
\end{align}
By combining the two expressions of \eqref{eq:41} and \eqref{eq:42} in a LS we get,
\begin{align}
\label{eq:43}
 \begin{bmatrix}
A_{k+1} \\
B_{k+1} \\
\end{bmatrix} \geq  \begin{bmatrix}
1 & \frac{\zeta \sqrt{\beta}}{2m} \\
\frac{\lambda \sqrt{\beta}}{2m} & 1 \\
\end{bmatrix}  \begin{bmatrix}
A_{k} \\
B_{k} \\
\end{bmatrix} \geq \hdots \geq \  \begin{bmatrix}
1 & \frac{\zeta \sqrt{\beta}}{2m} \\
\frac{\lambda \sqrt{\beta}}{2m} & 1 \\
\end{bmatrix}^{k+1}  \begin{bmatrix}
A_{0} \\
B_{0} \\
\end{bmatrix}
\end{align}
The Jordan decomposition of the matrix in the above expression is given by,
\begin{align}
\label{eq:44}
\begin{bmatrix}
1 & \frac{\zeta \sqrt{\beta}}{2m} \\
\frac{\lambda \sqrt{\beta}}{2m} & 1 \\
\end{bmatrix} =  \begin{bmatrix}
-\sqrt{\frac{\zeta}{\lambda}} & \sqrt{\frac{\zeta}{\lambda}} \\
1 & 1 \\
\end{bmatrix} \begin{bmatrix}
\sigma_2 & 0 \\
0 & \sigma_1 \\
\end{bmatrix}   \begin{bmatrix}
-\frac{1}{2}\sqrt{\frac{\lambda}{\zeta}} & \frac{1}{2} \\
\frac{1}{2}\sqrt{\frac{\lambda}{\zeta}} & \frac{1}{2} \\
\end{bmatrix}
\end{align}
Here, $\sigma_1 = 1+\frac{\sqrt{\lambda \beta \zeta}}{2m}$ and $\sigma_2 = 1-\frac{\sqrt{\lambda \beta \zeta}}{2m}$. Using $A_0 = 0$ and the decomposition of \eqref{eq:44}, from equation \eqref{eq:43} we have,
\begin{align*}
 \begin{bmatrix}
A_{k+1} \\
B_{k+1} \\
\end{bmatrix} & \geq \  \begin{bmatrix}
1 & \frac{\zeta \sqrt{\beta}}{2m} \\
\frac{\lambda \sqrt{\beta}}{2m} & 1 \\
\end{bmatrix}^{k+1}  \begin{bmatrix}
A_{0} \\
B_{0} \\
\end{bmatrix} \\
& = \begin{bmatrix}
-\sqrt{\frac{\zeta}{\lambda}} & \sqrt{\frac{\zeta}{\lambda}} \\
1 & 1 \\
\end{bmatrix} \begin{bmatrix}
\sigma_2^{k+1} & 0 \\
0 & \sigma_1^{k+1} \\
\end{bmatrix}   \begin{bmatrix}
-\frac{1}{2}\sqrt{\frac{\lambda}{\zeta}} & \frac{1}{2} \\
\frac{1}{2}\sqrt{\frac{\lambda}{\zeta}} & \frac{1}{2} \\
\end{bmatrix} \begin{bmatrix}
0 \\
B_{0} \\
\end{bmatrix} \\
& = \begin{bmatrix}
 \frac{1}{2}\sqrt{\frac{\zeta}{\lambda}} \left(\sigma_1^{k+1}- \sigma_2^{k+1}\right) B_0 \\
\frac{1}{2} \left(\sigma_1^{k+1}+ \sigma_2^{k+1}\right) B_0  \\
\end{bmatrix}
\end{align*}
The above gives us the following growth bound for the sequences $\{A_k\}$ and $\{B_k\}$ as follows:
\begin{align}
& A_{k+1} \geq   \frac{1}{2}\sqrt{\frac{\zeta}{\lambda}} \left(\sigma_1^{k+1}- \sigma_2^{k+1}\right) B_0 \label{eq:45} \\
& B_{k+1} \geq \frac{1}{2} \left(\sigma_1^{k+1}+ \sigma_2^{k+1}\right) B_0 \label{eq:46}
\end{align}
Substituting these above bounds of \eqref{eq:45} and \eqref{eq:46} in equation \eqref{eq:39}, we get the following bounds:
\begin{align}
\E \left(r_{k+1}^2\right) & = \E \left[\|v_{k}-x^*\|^2_{(A^TA)^{\dagger}}\right] \leq \frac{B_0^2}{B_{k+1}^2} \leq \frac{4 \|x_0-x^*\|^2_{(A^TA)^{\dagger}} }{\left(\sigma_1^{k+1}+ \sigma_2^{k+1}\right)^2} \label{eq:47} \\
\E & \left[\|x_{k+1}-x^*\|^2\right]  \leq \frac{B_0^2}{A_{k+1}^2} r_0^2 \leq \  \frac{4 \lambda \|x_0-x^*\|^2_{(A^TA)^{\dagger}} }{ \zeta \left(\sigma_1^{k+1} - \sigma_2^{k+1}\right)^2}  \label{eq:48}
\end{align}
The above equations complete the proof of Theorem \ref{th:1}.
\end{proof}

\end{remark}

\section{Numerical Experiments}
\label{sec:num}
We implemented the ASKM algorithm in MATLAB and performed the numerical experiments in a Dell Precision 7510 workstation with 32GB RAM, Intel Core i7-6820HQ CPU, processor running at 2.70 GHz. We divided the numerical experiments into three categories: experiments on randomly generated problems, experiments on real-world non-random problems and comparison among different methods. In these experiments, we compared ASKM with SKM and other state-of-the-art methods (i.e., IPM and ASM). As mentioned earlier, our main focus is on the over-determined systems regime (i.e., $m \gg n$), where iterative methods are applied in general. For all of the experiments, we ran the algorithms 10 times and report the averaged performance.

\subsection{Comparison of SKM and ASKM on random data}
\label{subsec:1}
We considered systems $Ax \leq b$ where the entries of $A$ and $b$ are chosen randomly from the corresponding distribution. To make sure that $b \in \mathcal{R}(\mathbf{A})$, we generated two vectors $x_1, x_2 \in \R^n$ at random from the corresponding distributions, then multiplied them by $A$ and set $b$ as a convex combination of those two vectors. We considered two types of random data sets: highly correlated systems and Gaussian systems. In the highly correlated systems, entries of $A$ are chosen uniformly at random between $[0.9,1.0]$ and $b$ is chosen accordingly such that the system $Ax \leq b$ has a feasible solution. The entries of $A$ in the Gaussian systems are chosen from standard normal distribution and $b$ is chosen accordingly as before.

In Figure \ref{fig:1}, we provide a comparison between SKM and ASKM for three randomly generated correlated systems. We compare the average computational time necessary for SKM and ASKM with several choices of sample size $\beta$ to reach positive residual error $10^{-05}$ (i.e., $\|\left(Ax_k-b\right)^+\|_2 \leq 10^{-05}$). We compare the two algorithms for the choice of \footnote{$\delta$ here is same as $\lambda$ in De Loera \textit{et. al} \cite{haddock:2017}} $\delta = 1$. For the three test cases, we see that for any $1 \leq \beta \leq m$, ASKM significantly outperform SKM in terms of average computation time.

\begin{figure}[H]
\begin{subfigure}{0.315\textwidth}
\includegraphics[width=\linewidth]{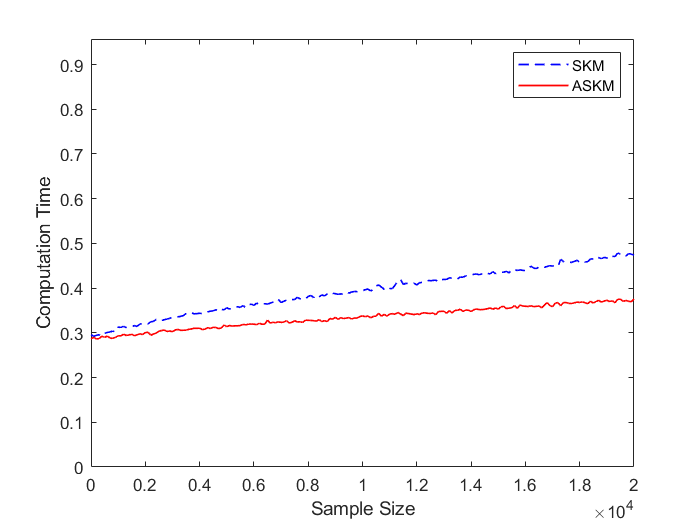}
\caption{$20,000 \times 1,000$}
\end{subfigure}
\hspace*{\fill}
\begin{subfigure}{0.31\textwidth}
\includegraphics[width=\linewidth]{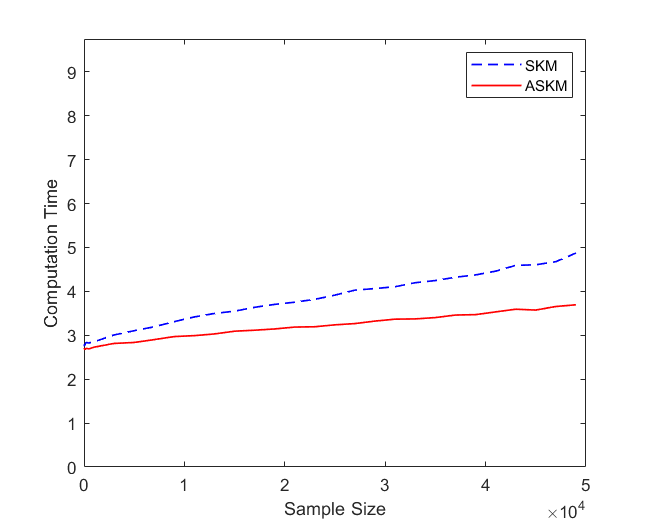}
\caption{$50,000 \times 4,000$}
\end{subfigure}
\hspace*{\fill}
\begin{subfigure}{0.32\textwidth}
\includegraphics[width=\linewidth]{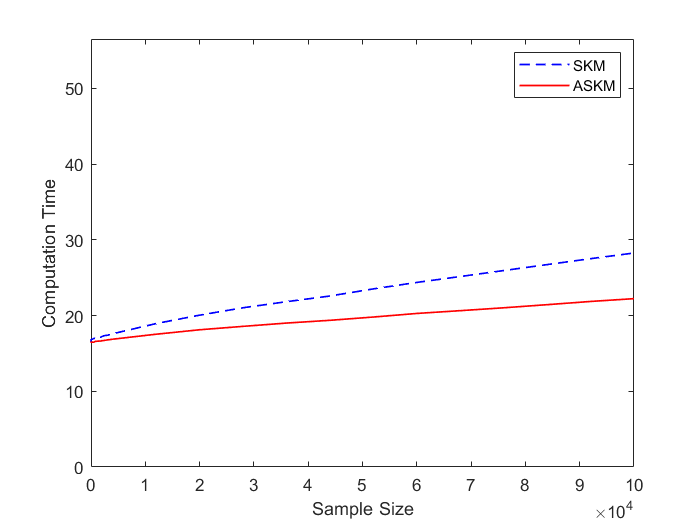}
\caption{$100,000 \times 10,000$}
\end{subfigure}
\caption{Average computation time of SKM and ASKM on highly correlated random systems to reach residual error $\|\left(Ax_k-b\right)^+\|_2 \leq 10^{-05}$.} \label{fig:1}
\end{figure}

In Figure \ref{fig:2}, we show the same comparison experiments for randomly generated Gaussian systems. Similar to the correlated systems, ASKM algorithm solves the Gaussian systems much faster than the SKM algorithm. Notice that in Figure \ref{fig:2}(b) the computational time of SKM algorithm stays at 1000 seconds for sample size $\beta \geq 2000$. This happens due to an additional terminating condition of maximum run time set at 1000 seconds. While SKM algorithm fails to converge within the limiting time for larger sample sizes ($\beta \geq 2000)$, ASKM algorithm finds a feasible solution for any sample size. Moreover, if we analyze the trend of ASKM's average computational time in both figures (Figure \ref{fig:1} and \ref{fig:2}), we see that ASKM accelerates the SKM algorithm and the nature of acceleration is quadratic which validates our claim of Theorem \ref{th:1}.

\begin{figure}[H]
\begin{subfigure}{0.49\textwidth}
\includegraphics[width=\linewidth]{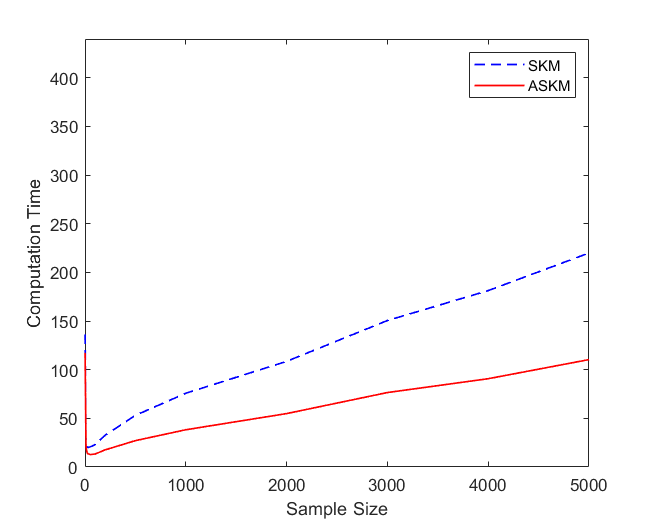}
\caption{$5,000 \times 1,000$}
\end{subfigure}
\hspace*{\fill}
\begin{subfigure}{0.49\textwidth}
\includegraphics[width=\linewidth]{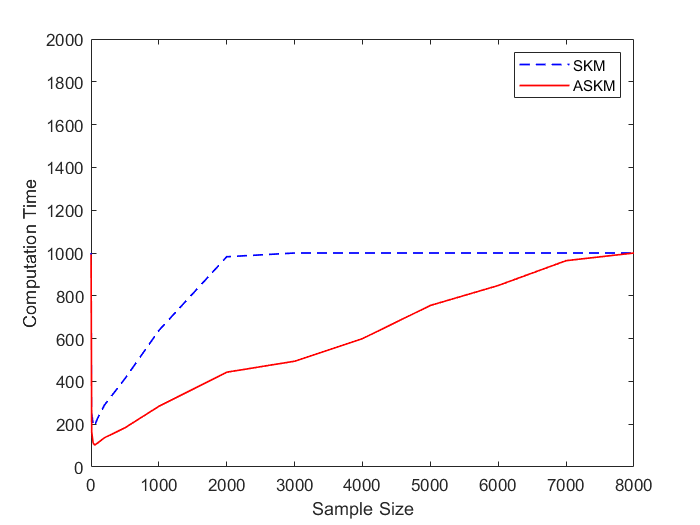}
\caption{$8,000 \times 2,000$} 
\end{subfigure}
\caption{Average computation time of SKM and ASKM on Gaussian random systems to either reach residual error $\|\left(Ax_k-b\right)^+\|_2 \leq 10^{-05}$ or a maximum run time limit of 1000 seconds.} \label{fig:2}
\end{figure}
\vspace{-0.5 cm}

In Figure \ref{fig:3} and \ref{fig:4}, we compare the positive residual error for SKM and ASKM for different sample sizes ($\beta = 1, 100, 1000$). We plot iteration versus residual error and time versus residual error for random Gaussian systems. Based on the findings of Figure \ref{fig:3} and \ref{fig:4}, we can conclude that irrespective of sample size selection, $\|(Ax_k-b)^+\|_2$ converges to zero much more faster for ASKM than for SKM. The convergence of $\|(Ax_k-b)^+\|_2$ for both ASKM and SKM are much slower for the choice of $\beta = 1$ as expected. 

\begin{figure}[H]
\begin{subfigure}{0.50\textwidth}
\includegraphics[width=\linewidth]{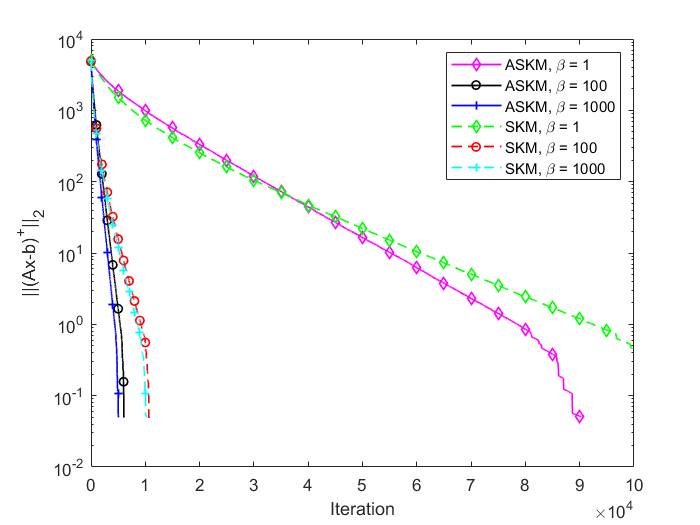}
\caption{Iterations versus residual error}
\end{subfigure}
\hspace*{\fill}
\begin{subfigure}{0.50\textwidth}
\includegraphics[width=\linewidth]{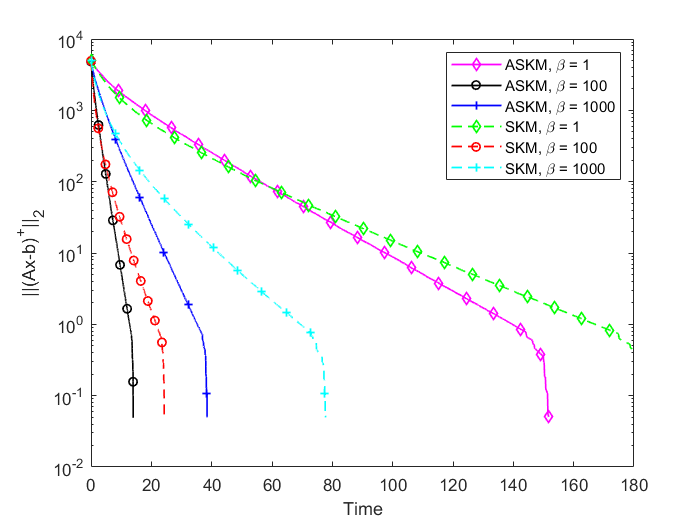}
\caption{Time versus residual error}
\end{subfigure}
\caption{Comparison of SKM and ASKM on a $5000 \times 1000$ random Gaussian system for sample size $\beta = 1, 100, 1000$.} \label{fig:3}
\end{figure}

\begin{figure}[H]
\begin{subfigure}{0.50\textwidth}
\includegraphics[width=\linewidth]{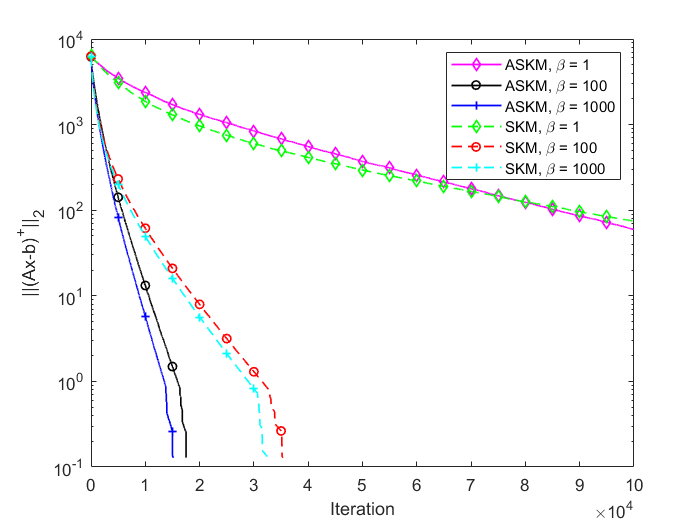}
\caption{Iterations versus residual error}
\end{subfigure}
\hspace*{\fill}
\begin{subfigure}{0.50\textwidth}
\includegraphics[width=\linewidth]{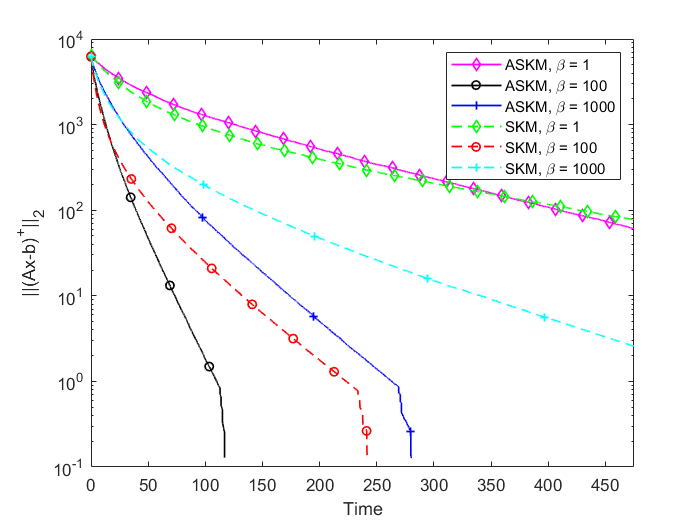}
\caption{Time versus residual error}
\end{subfigure}
\caption{Comparison of SKM and ASKM on a $8000 \times 2000$ random Gaussian system for sample size $\beta = 1, 100, 1000$.} \label{fig:4}
\end{figure}

For $\beta = 100$ and $\beta = 1000$, the convergence rate of ASKM takes over SKM after a small amount of time. In addition, the convergence rate remains similar for both the test case problems ($5000 \times 1000$ and $8000 \times 2000$). To investigate the solution quality of both SKM and ASKM, we measure the number of satisfied constraints after each iteration and the corresponding computational time for both algorithms. We summarize our findings in Figure \ref{fig:5} and \ref{fig:6} for the above test sets. For simplification, we used the Fraction of satisfied constraints (FSC) as a measure of quality of the solution generated by both SKM and ASKM algorithms. After analyzing Figures \ref{fig:5} and \ref{fig:6}, we can conclude that the choice of $\beta = 1$ is the worst choice as both SKM and ASKM takes much more time to satisfy all the constraints. However, for the choice of $\beta = 100$ and $\beta = 1000$, ASKM takes much less time compared to SKM to find a solution within the error margin. For example, in Figure \ref{fig:5}, the choice of $\beta = 1000$ ASKM takes approximately 37 seconds to satisfy all the 5000 constraints whereas SKM takes up to 75 seconds.
\begin{figure}[H]
\begin{subfigure}{0.49\textwidth}
\includegraphics[width=\linewidth]{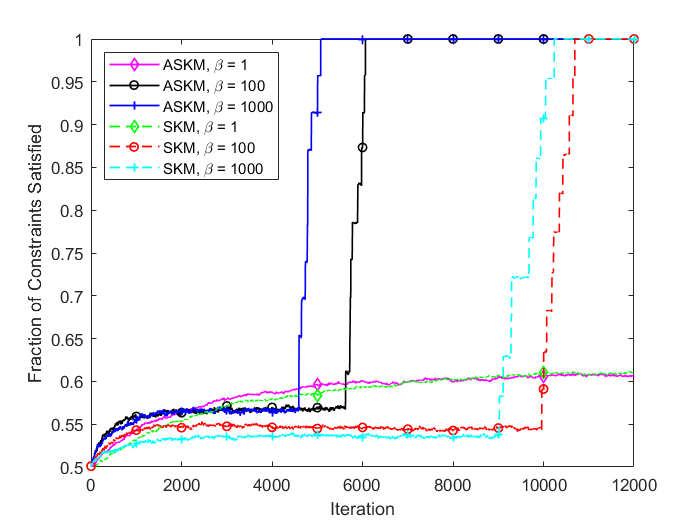}
\caption{Iterations versus FSC}
\end{subfigure}
\hspace*{\fill}
\begin{subfigure}{0.50\textwidth}
\includegraphics[width=\linewidth]{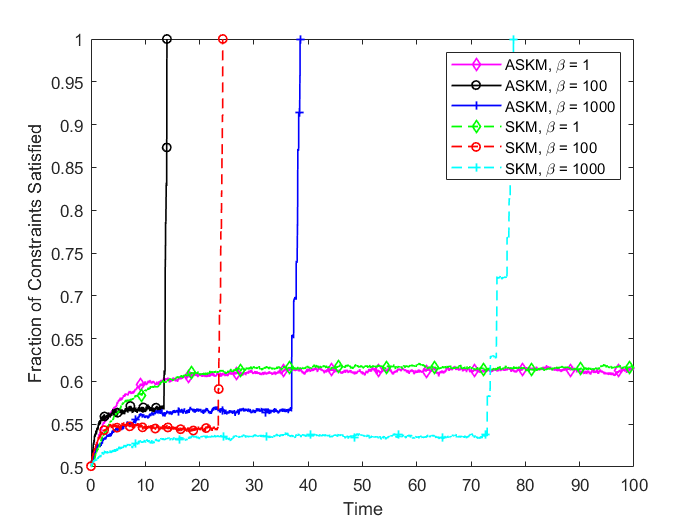}
\caption{Time versus FSC}
\end{subfigure}
\caption{Comparison of SKM and ASKM in terms of fraction of satisfied constraints on a $5000 \times 1000$ random Gaussian system.} \label{fig:5}
\end{figure} 
\begin{figure}[H]
\begin{subfigure}{0.49\textwidth}
\includegraphics[width=\linewidth]{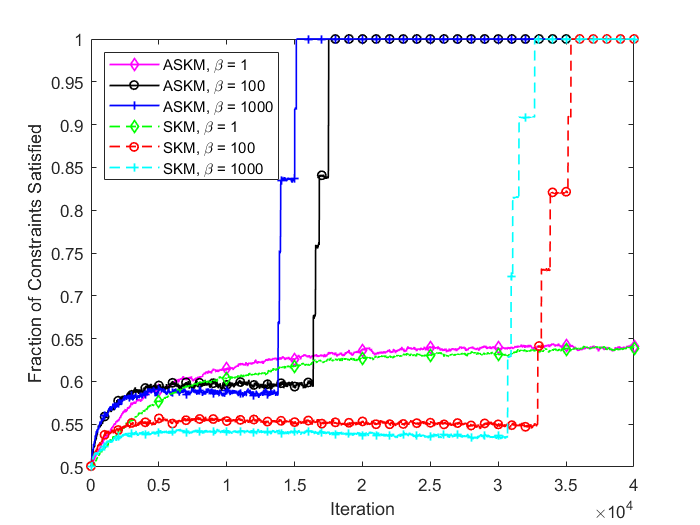}
\caption{Iterations versus FSC}
\end{subfigure}
\hspace*{\fill}
\begin{subfigure}{0.50\textwidth}
\includegraphics[width=\linewidth]{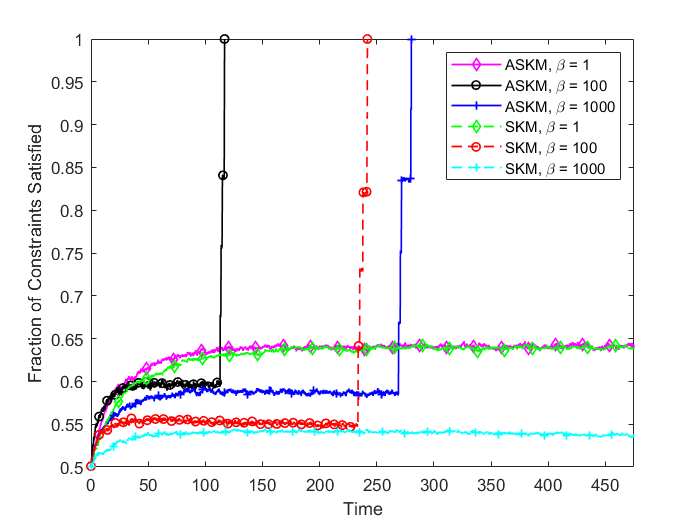}
\caption{Time versus FSC}
\end{subfigure}
\caption{Comparison of SKM and ASKM in terms of fraction of satisfied constraints on a $8000 \times 2000$ random Gaussian system.} \label{fig:6}
\end{figure}

\subsection{Comparison of SKM and ASKM for real-world non-random data}
\label{subsec:2}
In this subsection, we consider two real-world non-random problems. We consider Support Vector Machine (SVM) instances with linear classification and feasibility problems arising in benchmark libraries. We considered the standard test cases given in \cite{calafiore:2014,lichman:2013,haddock:2017}. 

We compare SKM and ASKM methods to solve the linear classification problem with SVM for 1) Wisconsin (diagnostic) breast cancer data set and 2) Credit card data set. The breast cancer data set includes data points whose features are computed from digitized images. Each data point is classified either as malignant or as benign. Our goal is to find a solution of the homogeneous system of inequalities, $Ax \leq 0$ which represents the separating hyperplane between malignant and benign data points. The system of inequalities has 569 constraints (data points) and 30 variables (features). Since the data set is not separable, we set SKM and ASKM to find the minimized residual norm $\|Ax_k\|_2$. For our setup, We consider the threshold $\|Ax_k\|_2 \leq 0.0005$ and $10^{-6}$.

The credit card data set described in \cite{yeh:2009,haddock:2017} consists of features describing the payment profile of user and binary variable for on-time or default payment in a certain billing cycle. Similar to the breast cancer data set, this problem can be solved by finding a solution to the corresponding homogeneous system of inequalities, $Ax \leq 0$ which represents the separating hyperplane between given on-time and default data points. The resulting system of inequalities has 30000 constraints (30000 user profiles) and 23 variables (22 profile features). Since the data set is not separable, we set SKM and ASKM to find the minimized residual norm $\|Ax_k\|_2$. For our setup, we considered the threshold as $\frac{\|Ax_k\|_2}{\|Ax_0\|_2} \leq 0.1$ and $0.001$.

\begin{figure}[t]
\begin{subfigure}{0.50\textwidth}
\includegraphics[width=\linewidth]{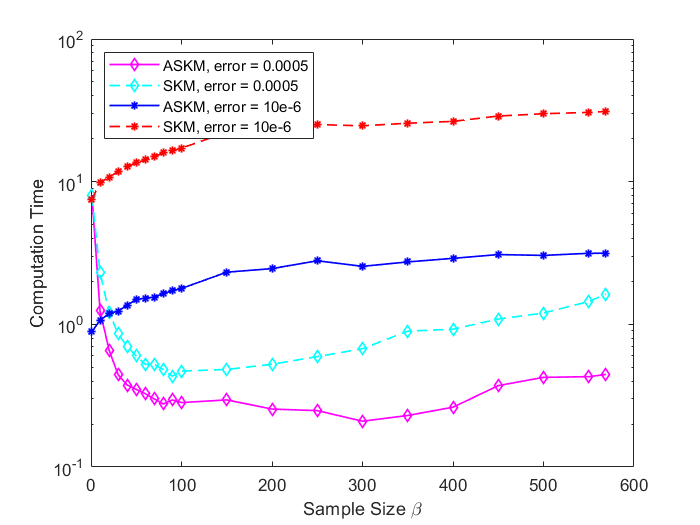}
\caption{Breast cancer data}
\end{subfigure}
\hspace*{\fill}
\begin{subfigure}{0.50\textwidth}
\includegraphics[width=\linewidth]{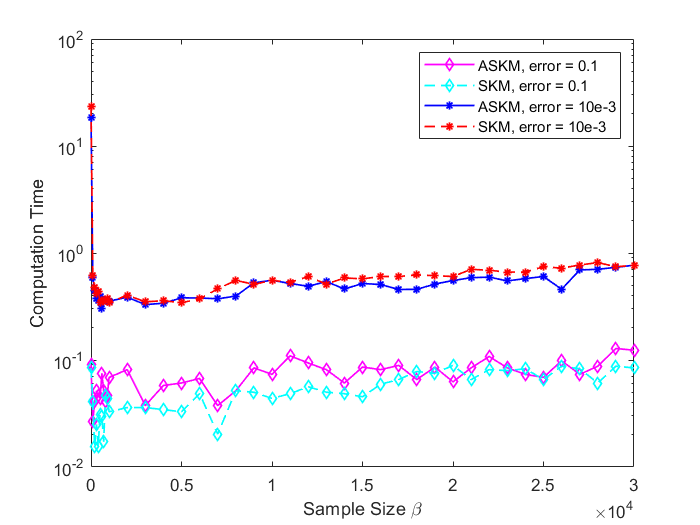}
\caption{Credit card data}
\end{subfigure}
\caption{Comparison of SKM and ASKM for real life data sets} \label{fig:7}
\end{figure} 
Based on the comparison graphs shown in Figure \ref{fig:7}, we can conclude ASKM performs much better than SKM for the breast cancer data set (Figure \ref{fig:7}(a)). For the credit card data set ASKM performs marginally better than SKM for smaller error. Also note that, the computation time curve for credit card data is not as smooth as previous curves, which we can attribute to the irregularity of the coefficients. Such irregularity in the coefficients creates a dependence bias between residual error and actual constraints.

\subsection{Comparison among SKM, ASKM and existing methods for Netlib LP}
\label{subsec:3}
In this subsection, we investigate the comparative performance of the proposed ASKM algorithm with SKM and benchmark algorithms such as IPM and ASM on several Netlib LPs. For the implementation of SKM and ASKM to the Netlib LPs, we follow the framework given by De Loera \textit{et. al} \cite{haddock:2017}. Each of these problems was formulated as a standard LP problem ($\min c^Tx$ subject to $Ax = b, \ l \leq x \leq u$ with optimum value $p^*$). Loera \textit{et. al} \cite{haddock:2017} transformed them into an equivalent LF problem $\bar{A}x \leq \bar{b}$, where $\bar{A} = [A \ -A \ I \ -I \ c^T]^T$ and $\bar{b} = [b \ -b \ u \ -l \ p^*]^T$. We used this setup for all the experiments on Netlib LPs.

In Table \ref{tab:1}, we provide the performance behaviour (computation time in seconds) of ASKM, SKM, IPM and ASM on the Netlib LPs. For fair comparison, we coded ASKM, SKM in MATLAB and compared with the MATLAB Optimization Toolbox function \texttt{fmincon}. Note that \texttt{fmincon} allows us to select both IPM and ASM methods.

\begin{table}[H]
\small{
\begin{center}
\caption{CPU time comparisons among MATLAB methods solving LP, and SKM and ASKM solving LF. $^*$ indicates that the solver did not solve the problem to the desired accuracy due to reaching an predetermined upper limit on function evaluations of 100000.}
\label{tab:1}
\begin{tabular}{@{}|c|c|c|c|c|c|c|c|@{}}
\hline
Instance      & Dimensions & ASKM   & SKM    & \begin{tabular}[c]{@{}c@{}}Interior \\   Point\end{tabular} & \begin{tabular}[c]{@{}c@{}}Active\\    set\end{tabular} & $\epsilon$                  & $\beta$ \\ \hline
lp\_brandy   & $1047 \times 303$ & 0.007  & 0.0117 & 16.97                                                       & 63.11                                                   & 0.1 & 50   \\ \hline
lp\_blend    & $337 \times 114$    & 0.41   & 0.56   & 2.28                                                        & 4.62                                                    & 0.001 & 20   \\ \hline
lp\_agg      & $2207 \times 615$   & 0.059  & 0.088  & $66.54^*$                                    & $315.91^*$                               & 0.01 & 50   \\ \hline
lp\_adlittle & $389 \times 138$    & 0.0008 & 0.002  & 2.16                                                        & 4.96                                                    & 0.01 & 10   \\ \hline
lp\_bandm    & $1555 \times 472$   & 0.28   & 0.24   & 14.57                                                       & $529.43^* $                              & 0.01 & 70   \\ \hline
lp\_degen2   & $2403 \times 757$   & 8.29   & 10.16  & 7.13                                                        & 21038                                                   & 0.01 & 200  \\ \hline
lp\_finnis   & $3123 \times 1064$  & 0.13   & 0.15   & $66.16^*$                                    & $237750^*$                               & 0.005                  & 100  \\ \hline
lp\_recipe   & $591 \times 204$    & 0.19   & 0.27   & 0.89                                                        & 63.24                                                   & 0.002                  & 30   \\ \hline
lp\_scorpion & $1709 \times 466$   & 6.83   & 11.86  & 17.68                                                       & 8.02                                                    & 0.005                  & 200  \\ \hline
lp\_stocfor1 & $565 \times 165$    & 0.31   & 0.37   & 2.13                                                        & 2.52                                                    & 0.001                  & 50   \\ \hline
\end{tabular}
\end{center}}
\end{table}

At first, we solve the feasibility problem ($\bar{Ax} \leq \bar{b}$) using SKM and ASKM and recorded the CPU time in Table \ref{tab:1}. But we didn't solve the feasibility problem ($\min 0 \ s.t \ \bar{A}x \leq \bar{b} $) directly using \texttt{fmincon}'s IPM and ASM algorithms since both of these methods fail to solve the feasibility problem due to the fact that in IPM, the \textit{Karush Kuhn Tucker} (KKT) condition system in each iteration becomes singular and similarly ASM halts in the initial step of finding a feasible point.

For fairness of comparison, in Table \ref{tab:1}, we list the CPU time as follows: for SKM and ASKM method we used the feasibility problem ($\bar{Ax} \leq \bar{b}$) and for the \texttt{fmincon} algorithms we used the original optimization LPs ($\min c^Tx \ s.t \ Ax \leq b, \ l \leq x\leq u $). As noted in \cite{haddock:2017}, this is not an obvious comparison. For a better comparison, following \cite{haddock:2017} we set the halting criterion for SKM and ASKM as $\frac{\max(\bar{A}x_k-\bar{b})}{\max(\bar{A}x_0-\bar{b})} \leq \epsilon$ and the halting criterion for the \texttt{fmincon}'s algorithms are set as $\frac{\max(Ax_k-b, l-x_k, x_k-u)}{\max(Ax_0-b, l-x_0, x_0-u)} \leq \epsilon$ and $\frac{c^Tx_k}{c^Tx_0} \leq \epsilon$, where $\epsilon$ is listed in Table \ref{tab:1}. For each problem, every method started with the same initial solution far from the feasible region.

The experiments show that the proposed ASKM method compares favorably with IPM and ASM methods. Notice that the improvement of ASKM over SKM method for some problems are marginal as the analyzed instances contain sparse matrices while our proposed ASKM is explicitly designed for dense problems. Following the method provided by Liu and Wright \cite{wright:2016}, we believe one can develop a sparse version of ASKM algorithm. The trick is to aggregate several steps to reduce the calculation by using the sparsity of the instances. For example, after calculating $x_k, y_k$ and $v_k$, instead of updating to $x_{k+1}, y_{k+1}$ and $v_{k+1}$, for $T \gg 1$ we can update $x_{k+T}, y_{k+T}$ and $v_{k+T}$ using the recurrence relation which will reduce the computational effort significantly.

\section{Conclusion}
\label{sec:colc}
In this work, we have proposed an accelerated version of SKM algorithm for solving LF problem using the celebrated Nesterov acceleration of \textit{Gradient Descent} method. The proposed algorithm also generalizes the accelerated randomized Kaczmarz algorithm for solving LS problems in the context of sample size $\beta$. We have performed a series of numerical experiments to show the performance and effectiveness of our proposed algorithm in comparison with IPM and ASM methods. ASKM algorithm performs favourably in comparison with the original SKM method, IPM and ASM method for a wide range of test instances. The proposed algorithm as it is, including the convergence analysis, can be adopted effectively for both dense and sparse systems, however, we believe, a more efficient algorithm is possible for the sparse case. In the future, we plan to extend this work to solve large-scale real-world problems with greater sparsity on the constraint matrix. Furthermore, due to the introduction of the acceleration to the SKM algorithm, we have a set of parameters (i.e., $\beta, d$ etc.) which we plan to optimize based on the problem structure to further improve the efficiency of the proposed algorithm.

\bibliographystyle{unsrt}
\bibliography{askm}

\end{document}